\numberwithin{equation}{section}
\newtheorem{theorem}{Theorem}[section]
\newtheorem{corollary}[theorem]{Corollary}
\theoremstyle{definition}
\newtheorem{definition}[theorem]{Definition}
\newtheorem{remark}[theorem]{Remark}
\newtheorem{example}[theorem]{Example}
\newcommand{\loc}{{\mathrm{loc}}}
\newcommand{\dx}{\,\mathrm{d}x}
\newcommand{\dy}{\,\mathrm{d}y}
\newcommand{\dt}{\,\mathrm{d}t}
\newcommand{\dnu}{\,\mathrm{d}\nu}
\newcommand{\drho}{\,\mathrm{d}\rho}
\newcommand{\core}{C_0^{\infty}(\Omega)}
\newcommand{\be}{\begin{equation}}
\newcommand{\ee}{\end{equation}}
\newcommand{\bea}{\begin{eqnarray}}
\newcommand{\eea}{\end{eqnarray}}
\newcommand{\bean}{\begin{eqnarray*}}
\newcommand{\eean}{\end{eqnarray*}}
\newcommand{\Real}{\mathbb{R}}
\newcommand{\Nat}{\mathbb{N}}
\newcommand{\opname}[1]{\mbox{\rm #1}\,}
\newcommand{\supp}{\opname{supp}}
\newlength{\wex}  \newlength{\hex}
     \def\gb{\beta}       
                         \def\vge{\varepsilon}
\def\gf{\phi}       \def\vgf{\varphi}    
      \def\gk{\kappa}      \def\gl{\lambda}
\def\gm{\mu}        \def\gn{\nu}         
    \def\gr{\rho}        
\def\gs{\sigma}
     \def\Gd{\Delta}      
\def\Gl{\Lambda}          
\def\Gw{\Omega}              
\newcommand{\Green}[4]{\mbox{$G^{#1}_{#2}(#3,#4)$}}
\newcommand{\beq}{\begin{equation}}
\newcommand{\eeq}{\end{equation}}
\newcommand{\beqa}{\begin{eqnarray}}
\newcommand{\eeqa}{\end{eqnarray}}
\newcommand{\beqanl}{\begin{eqnarray*}}
\newcommand{\eeqanl}{\end{eqnarray*}}
\def\squarebox#1{\hbox to #1{\hfill\vbox to #1{\vfill}}}
\newcommand{\Rn}{\Real ^n}
\newcommand{\R}{{\mathbb R}}
\newcommand{\N}{{\mathbb N}}
\title[Boundedness and compactness of Green operators]{On the boundedness and compactness of weighted Green operators of second-order elliptic operators}
\author[Yehuda Pinchover]{Yehuda Pinchover}
\address{Yehuda Pinchover,
Department of Mathematics, Technion - Israel Institute of
Technology,   Haifa 32000, Israel}
\email{pincho@techunix.technion.ac.il}
\thanks{Dedicated to Professor Pavel Exner
on the occasion of his 70th birthday. 
The author is grateful to
Professor M.~Cwikel and the late professor V.~Liskevich for valuable
discussions. This research was supported by the Israel Science
Foundation (grants No. 963/11) founded by the Israel Academy of
Sciences and Humanities.}
\begin{document}
\begin{abstract} For a given second-order linear elliptic operator $L$ which admits a positive minimal Green function, and a given positive weight function $W$, we introduce  a family of weighted Lebesgue spaces $L^p(\phi_p)$ with their dual spaces, where $1\leq p\leq \infty$. We study some fundamental properties of the corresponding (weighted) Green operators on these spaces. In particular, we prove that these Green operators are bounded on $L^p(\phi_p)$ for any $1\leq p\leq \infty$ with a uniform bound. We study the existence of a
principal eigenfunction for these operators in these spaces, and the simplicity of the corresponding principal eigenvalue. We also show that such a Green operator is
a resolvent of a densely defined closed operator which is equal to $(-W^{-1})L$ on $C_0^\infty$, and that this closed operator generates a strongly
continuous contraction semigroup. Finally, we prove that if $W$ is a (semi)small perturbation of $L$, then for any $1\leq p\leq \infty$, the associated Green operator is compact on $L^p(\phi_p)$, and the corresponding spectrum is $p$-independent.\\[2mm]

\noindent  2000  \! {\em Mathematics  Subject  Classification.}
Primary  \! 35J08; Secondary  35B09, 35J15, 35P05, 47D06.\\[2mm]
\noindent {\em Keywords.} Green function, ground state,  Liouville theorem, positive solution, principal eigenvalue, small perturbation.

\end{abstract}

\maketitle


\section{Introduction}
Positive (Dirichlet) Green functions of second-order linear elliptic operators with real coefficients and their induced integral operators are among the most important building blocks of the elliptic theory for such operators, and in particular, for the qualitative theory of positive solutions of the corresponding homogeneous equations. In many problems, and  in particular in the study of positive solutions, the underling topology is the open compact topology, i.e. the topology of locally compact convergence (e.g., in the Martin boundary theory, in the study of the heat kernel,  and in criticality theory). On the other hand, when dealing with spectral theory of such operators, in the study of semigroups generated by such operators, or in the study of well-posedness of boundary value problems, one should usually specify a relevant Banach space. Frequently, one takes one of the classical Lebesgue spaces as the underlying space, but a priori, it is not clear why these spaces are the appropriate functional spaces to study various elliptic problems for a specific operator in a given domain.

\medspace

In the present paper, we introduce for a given second-order linear elliptic operator $L$ which is defined on a noncompact manifold $\Gw$ and admits a positive Green function, and for a given positive weight function $W$, a family of weighted Lebesgue spaces $L^p(\gf_p)$, where $1\leq p\leq \infty$. The weight $\gf_p$ is given by
$$\gf_p:=\gf^{-1}(\gf W\tilde{\gf})^{1/p},
$$ where $\gf$,  (resp.    $\tilde{\gf}$) is a fixed positive solution of the equation $(L-\gm W)u=0$ (resp.    $(L^\star -\gm W)u=0$) in $\Gw$, and $L^\star$ is the formal adjoint of $L$ (see Section~\ref{sec1} for a detailed discussion on these spaces, and also for the needed terminology and some preliminary results).

Clearly, if the positive Liouville theorem holds for $L-\gm W$ and $L^\star-\gm W$ in $\Gw$, then $L^p(\gf_p)$ is uniquely defined. In addition, $L^1(\gf_1)$ is always independent of $\gf$ while $L^\infty(\gf_\infty)$ is independent of $\tilde{\gf}$ and $W$. In \cite{P99}, $L^\infty(\gf_\infty)$ was introduced, and some properties of weighted Green operators on $L^\infty(\gf_\infty)$ were studied.    Moreover, if $L$ is symmetric, then one may choose $\tilde{\gf}=\gf$, and then
$L^2(\gf_2)=L^2(\Gw,W)$, and hence, this space is $\gf$~independent. We also note that $\{L^p(\gf_p)\}_{p\geq 1}$ is a family of real interpolation spaces. The latter observation is used to prove several results of the paper.

\medskip

The aim of the present paper is to study some fundamental properties of the induced weighted Green operators on these weighted Lebesgue spaces. In particular, we prove in Section~\ref{secgreen} that the corresponding weighted Green operator is bounded on $L^p(\gf_p)$ for any $1\leq p\leq \infty$ with a bound independent of $p$, $\gf$, and $\tilde{\gf}$. In Section~\ref{sec_principal}, we study the existence and uniqueness  of a
principal eigenfunction for these weighted Green operators on $L^p(\gf_p)$, and the simplicity of the corresponding principal eigenvalue for $1\leq p\leq \infty$.

Next, we show in Section~\ref{secsemigr} that for $1\leq p< \infty$, the weighted Green operator is
a resolvent of a densely defined closed operator $A_p$ such that $A_p=-W^{-1}L$ on $\core$. It turns out that under some further assumptions, $A_p$ generates a strongly
continuous contraction semigroup. Finally, in Section~\ref{secanti} we prove that if $W$ is  a {\em (semi)small perturbation} of $L$ in $\Gw$, then for any $1\leq p\leq \infty$, the associated weighted Green operator is compact on $L^p(\gf_p)$, and the corresponding spectrum is $p$-independent. We note that if in addition $L$ is symmetric, then it follows from  \cite{P99} that for any $k\geq 1$ the quotient $\phi_k/\phi$ has a continuous extension up to
the Martin boundary of the pair $(\Omega, L)$, where $\phi$ is
the ground state of $L$ with a principal eigenvalue $\lambda_0$, and $\phi_k$ is a weighted eigenfunction in $L^2(\Gw, W\!\dnu)$. It follows from the $p$-independence of the spectrum that in fact, $\phi, \phi_k \in L^p(\gf_p)$, for all  $1\leq p\leq \infty$.

\medskip

The problem of the $p$-independence of the spectrum of generators of semigroups on $L^p$ was raised by B.~Simon in \cite{Si80} for Schr\"odinger semigroups in $\R^d$ and has been studied in many papers, see for example \cite{arendt,D,D1,kunstVogt,LiskVogt,LiskSobVogt,Sem,Sh, Si80,SV} and references therein.

Various sufficient conditions that guarantee that $L$ has a pure point-spectrum in certain spaces is given for example in \cite{bah,LSW,MPW} and references therein.

\section{Preliminaries}\label{sec1}

\subsection{Elliptic operators and positive solutions}\label{ssectell}
Let $\Gw$ be a domain in $\R^d$, or more generally, a noncompact connected $C^2$-smooth Riemannian manifold of dimension
$d$.  By a positive function we mean a strictly positive function. We assume that $\nu$ is a positive measure on $\Omega$, satisfying $\dnu=f\,\mathrm{vol}$ with $f$ a positive measurable function; $\mathrm{vol}$ being the volume form of $\Omega$ (which is just the Lebesgue measure in the case of a domain $\Gw$ in $\R^d$). We write $\Omega_1 \Subset \Omega_2$ if $\Omega_2$ is open, $\overline{\Omega_1}$ is
compact and $\overline{\Omega_1} \subset \Omega_2$. Denote by $B(x_0,\delta)$ the open ball of radius $\delta>0$ centered at
$x_0$. Let $\mathbf{1}$ be the constant function on $\Gw$ taking
at any point $x\in \Gw$ the value $1$. For a matrix $A(x)=\big[a^{ij}(x)\big]$ and a vector field $b(x)=  \big(b^j(x)\big)$ we denote
$$
(A(x) \,\xi)^i := \sum_{j=1}^d a^{ij}(x) \,\xi_j,\;\; b(x) \cdot \xi  := \sum_{j=1}^d b^j(x)\, \xi_j , \; \mbox{where } \xi\!=\!(\xi_1,\ldots,\xi_d)\!\in\! \R^d.
$$

We associate to $\Gw$ a fixed {\em exhaustion}
$\{\Omega_{n}\}_{n=1}^{\infty}$, i.e., a sequence of smooth,
relatively compact subdomains such that $\Gw_1\neq \emptyset$,
$\Omega_{k}\Subset \Omega_{k+1}$ and
$\cup_{k=1}^{\infty}\Omega_{k}=\Omega$. For every $k\geq 1$, we
denote $\Gw_{k}^\star =\Gw\setminus \overline{\Gw_k}$.

Let $L$ be a
linear, second-order, elliptic operator defined on $\Gw$.  We assume that the coefficients of $L$ are real, and that in any coordinate system
$(U;x_{1},\ldots,x_{n})$, the operator $L$ is of the divergence form
\begin{equation} \label{div_L}
Lu:=-\mathrm{div} \left(A(x)\nabla u +  u\tilde{b}(x) \right)  +
 b(x)\cdot\nabla u   +c(x)u.
\end{equation}
Here, the minus divergence is the formal adjoint of the gradient with respect to the measure $\nu$.

We assume that for every $x\in\Gw$ the matrix $A(x):=\big[a^{ij}(x)\big]$ is symmetric, and the associated real quadratic form
\be\label{ellip}
 \xi \cdot A(x) \,\xi := \sum_{i,j =1}^d \xi_i a^{ij}(x)\, \xi_j \qquad
 \xi \in \Real ^d
\end{equation}
is positive definite. Moreover, throughout the paper it is assumed that $L$ is locally uniformly elliptic, and the coefficients of $L$ are real valued and locally sufficiently regular in $\Omega$. All our results
hold for example when $L$ is of the form \eqref{div_L}, and $A$ and $f$ are locally H\"{o}lder continuous, $b,\,\tilde{b} \in L^p_{\mathrm{loc}}(\Omega; \mathbb{R}^n,\dx)$, and $c \in L^{p/2}_{\mathrm{loc}}(\Omega; \mathbb{R},\dx)$ for some $p > d$. However it would be apparent from the proofs that any conditions that guarantee standard elliptic regularity theory are sufficient. By a {\em potential} defined in $\Gw$, we mean a function $V \in L^{p/2}_{\mathrm{loc}}(\Omega; \mathbb{R},\!\dx)$ for some $p > d$.

The formal adjoint $L^\star $ of the operator $L$ is defined on its natural space
$L^2(\Omega, \!\!\dnu)$. When $L$ is in divergence form (\ref{div_L}) and $b = \tilde{b}$, the operator
\be
\nonumber
Lu = - \mathrm{div} \left(A \nabla u + u b \right) + b \cdot \nabla u + c u,
\ee
is {\em symmetric} in the space $L^2(\Omega, \dnu)$. Throughout the paper, we call this setting the {\em symmetric case}.  We note that if $L$ is symmetric and $b$ is smooth enough, then $L$ is in fact a Schr\"odinger-type operator of the form
\be
\nonumber
Lu = - \mathrm{div}\big(A \nabla u \big) + V u, \quad \mbox{where } V:=\big(c-\mathrm{div}\, b\big).
\ee

By a solution $v$ of the equation $Lu=0$ in $\Gw$, we mean $v\in W^{1,2}_{\loc}(\Gw)$ that satisfies  the equation $Lu=0$ in $\Gw$ in the weak sense. Subsolutions and supersolutions are defined similarly. We denote the cone of all positive solutions of the
equation $Lu = 0$ in $\Omega$ by $\mathcal{C}_{L}(\Gw)$.

\begin{remark}
We would like to point out that the theory of positive solutions of the equation $Lu=0$ in $\Gw$ (the so-called {\em criticality theory}), and
in particular the results of this paper, are also valid for the
class of classical solutions of locally uniformly elliptic operators of the form
\be \label{L}
Lu=-\sum_{i,j=1}^d a^{ij}(x)\partial_{i}\partial_{j}u + b(x)\cdot\nabla u+c(x) u,
\end{equation}
with real and locally H\"older continuous coefficients,
and for the class of strong solutions of locally uniformly elliptic operators of the form \eqref{L} with locally bounded coefficients (provided that the formal adjoint operator  also satisfies the same assumptions). Nevertheless, for the
sake of clarity, we prefer to present our results only for the
class of weak solutions. \end{remark}

Fix a nonzero nonnegative potential $W$ defined in $\Gw$, and for $\gl\in\R$ denote by $L_\gl$ the elliptic operator $L-\gl W$. Consider
the (weighted) {\em generalized principal eigenvalue}  of the operator $L$
\begin{equation}\label{eq_lambda_0}
    \gl_0= \gl_0(L,W,\Gw):= \sup\{\gl \in \Real\mid \mathcal{C}_{L_\gl}(\Gw)\neq \emptyset\}.
\end{equation}
Note that if $\gl_0\neq -\infty$ (as assumed throughout the present paper), then  in fact, $\gl_0= \max\{\gl \in \Real\mid \mathcal{C}_{L_\gl}(\Gw)\neq \emptyset\}$.

\medskip

If $\gl_0\geq 0$, then for every $k \geq 1$ the Dirichlet Green
function $\Green{\Omega_{k}}{L}{x}{y}$ of the operator $L$ in $\Gw_k$ exists and is positive. By
the generalized maximum principle,
$\{\Green{\Omega_{k}}{L}{x}{y}\}_{k=1}^{\infty}$ is an increasing
sequence converging as $k\to\infty$ either to $\Green{\Omega}{L}{x}{y}$, the
positive {\em minimal  Green function} of $L$ in $\Omega$, and
then $L$ is said to be a {\em subcritical operator} in $\Omega\,
,$ or to infinity and in this case $L$ is  {\em critical} in
$\Omega$. If $\mathcal{C}_{L}(\Omega) = \emptyset$, then $L$ is
{\em supercritical} in $\Gw$  \cite{Mu86,P88a} (cf. \cite{Si80}).

It follows that $L$ is critical (resp.    subcritical) in $\Omega$,
if and only if $L^\star $ is critical (resp.    subcritical) in $\Omega$.
Clearly,  $L_\gl$ is subcritical in $\Gw$ for every $\gl\in
(-\infty,\gl_0)$, and supercritical for $\gl >\gl_0$.
Furthermore,  if $L$ is critical in $\Omega$, then
$\mathcal{C}_{L}(\Omega)$ is a one-dimensional cone and any
positive supersolution of the equation $Lu=0$ in $\Gw$ is a
solution. So, in the critical case,  $\gf \in \mathcal{C}_{L}(\Omega)$ is uniquely defined (up to a multiplicative positive constant), and  such $\gf$ is called
the {\em Agmon ground state of $L$ in $\Gw$}.

Subcriticality is a stable property in the following sense. If $L$
is subcritical in $\Gw$ and $V$ is a potential with a compact support in $\Gw$, then there exists $\varepsilon>0$ such that $L-\gm V$ is
subcritical, for all $|\gm | < \varepsilon$ \cite{Mu86,P88a}. On the other
hand, if $L$ is critical in $\Gw$ and $V$ is a
nonzero, {\em nonnegative} potential, then for any $\varepsilon>0$ the operator
$L+\varepsilon V$ is  subcritical and $L-\varepsilon V$ is  supercritical in
$\Gw$.

\begin{definition}[{\rm Agmon \cite{Ag2}}] \label{defminimalg}
Let $L$ be an elliptic operator of the form \eqref{div_L} defined in  $\Omega$. A function
$u \in \mathcal{C}_L(\Gw^\star_n)$ is said to be a {\em positive
solution of the equation $Lu=0$ of minimal growth in a neighborhood
of infinity in} $\Omega$, if for any  $k> n$ and any $v\in
\overline{C(\Gw^\star _k)}$ which is a positive supersolution of the equation $Lw=0$ in $\Gw^\star _k$, the inequality
$u\le v$ on $\partial \Gw _k$ implies that $u\le v$ in $\Gw _k^\star $.
\end{definition}

In the sequel, in order to simplify our terminology,  we will call a positive  minimal  Green function  - a {\em  Green function}, an Agmon ground state - a {\em ground state}, and a positive
solution of minimal growth in a neighborhood
of infinity in $\Omega$, a {\em positive
solution of minimal growth} in $\Omega$.

\medskip

It turns out that if $L$ is subcritical in $\Gw$, then for any fixed $y\in \Gw$ the Green function $G_L^\Gw(\cdot,y)$ is a positive solution of the equation $Lu=0$ in $\Gw\setminus\{y\}$ of minimal growth in $\Omega$. On the other hand, if $L$ is critical in $\Gw$, then the ground state $\gf$ is a (global) positive solution of the equation $Lu=0$ in $\Gw$ of minimal growth in $\Omega$.

\medskip

Fix a nonzero nonnegative potential $W$ defined in $\Gw$. Let $v$ and $\tilde{v}$ be positive
solutions of the equations $L_{\gm}u=0$ and $L^\star _{\gm}u=0$ in
$\Gw$, respectively, where $\gm\leq \gl_0$. Then \cite{Pcrit2} for
every $\gl<\gm$ we have
  \begin{equation}\begin{split}\label{gseqsc}
  \displaystyle{\int_{\Omega}\Green{\Omega}{L_\gl}{x}{y}W(y)v(y)\dnu(y)}
\leq \displaystyle{\frac{v(x)}{\gm-\gl}} & \qquad\forall x\in \Gw, \\[5mm]
\displaystyle{ \int_{\Omega}\Green{\Omega}{L_\gl}{x}{y}W(x)\tilde{v}(x)\dnu(x)}
\leq \displaystyle{\frac{\tilde{v}(y)}{\gm-\gl}} & \qquad\forall y\in \Gw.
  \end{split}
 \end{equation}
 Moreover, in each of
the inequalities in \eqref{gseqsc} either equality or strict
inequality holds for all points in $\Gw$ and $\gl<\gm$. If equality
holds for $v$ (resp.    $\tilde{v}$), then $v$ (resp.    $\tilde{v}$)
is said to be a positive {\em invariant solution} of the equation
$L_{\gm}u=0$ (resp.    $L^\star _{\gm}u=0$) in $\Gw$, or $\gm$-{\em
invariant solution} of the operator $L$ (resp.    $L^\star $) in $\Gw$.

Assume now that  $L_{\gl_0}$ is critical in $\Gw$ with $\gl_0\in
\mathbb{R}$, and let $\gf$ and $\tilde{\gf}$ be the ground states
of $L_{\gl_0}$ and $L^\star _{\gl_0}$, respectively. Then
\cite[Theorem~2.1]{Pcrit2} $\gf$ and $\tilde{\gf}$ are positive
invariant solutions of the equations $L_{\gl_0}u=0$ and
$L^\star _{\gl_0}u=0$, respectively, Hence, for every $\gl<\gl_0$ we
have \begin{equation}\begin{split}\label{gseq}
\displaystyle{\int_{\Omega}\Green{\Omega}{L_\gl}{x}{y}W(y)\gf(y)\dnu(y)
=\frac{\gf(x)}{\gl_0-\gl}} &  \quad\forall x\in \Gw,\\[5mm]
\displaystyle{\int_{\Omega} \Green{\Omega}{L_\gl}{x}{y}W(x)\tilde{\gf}(x)\dnu(x)
=\!\frac{\tilde{\gf}(y)}{\gl_0-\gl}} & \quad \forall y\in \Gw.
 \end{split}
 \end{equation}

  Assume further that
$W$ is a positive function. If $\gf\,\tilde{\gf} \in
L^1(\Gw,W\dnu)$, then $L_{\gl_0}$ is called {\em positive-critical
in $\Gw$ with respect to $W$}. Otherwise, $L_{\gl_0}$ is called
{\em null-critical in $\Gw$ with respect to $W$}.
\begin{remark}\label{remnoninv}
 Let $\gm\leq \gl_0$, and suppose that $L_{\gm}$ is a subcritical
operator in $\Gw$. Let $v$ and $\tilde{v}$ be positive solutions
of the equations $L_{\gm}u=0$ and $L^\star _{\gm}u=0$ in $\Gw$,
respectively, such that $v\,\tilde{v} \in L^1(\Gw,W\dnu)$. Since for every fixed $x$ (resp.    $y$)
the function $\Green{\Omega}{L_{\gm}}{x}{\cdot}$ (resp.    $\Green{\Omega}{L_{\gm}}{\cdot}{y}$)
is a positive
solution of the operator $L_{\gm}^\star $ (resp.    $L_{\gm}$)  of minimal growth in $\Omega$, the integrability condition $v\,\tilde{v} \in L^1(\Gw,W\dnu)$ implies that
\begin{equation}\label{eq_nonenvar}
\int_{\Omega}\!\!\Green{\Omega}{L_{\gm}}{x}{y}W(y)v(y)\dnu(y)
<\infty, \quad \mbox{and }
\int_{\Omega}\!\!\Green{\Omega}{L_{\gm}}{x}{y}W(x)\tilde{v}(x)\dnu(x)<\infty.
\end{equation}
In light of \cite[Lemma~2.1]{PStroock}, \eqref{eq_nonenvar} implies  that $v$ and
$\tilde{v}$ are not $\gm$-invariant positive solutions of the equations
$L_{\gm}u=0$ and $L^\star _{\gm}u=0$  in $\Gw$, respectively. In other words, if $v\in \mathcal{C}_{L_\gm}(\Gw)$ (resp.    $\tilde{v}\in\mathcal{C}_{L_\gm^\star}(\Gw)$) is a positive $\gm$-invariant solution
of the operator $L$ (resp.    $L^\star$),  and $v\,\tilde{v} \in L^1(\Gw,W\dnu)$, then $\gm=\gl_0$ and $L_{\gl_0}$ is positive-critical with respect to $W$ in $\Gw$.
 \end{remark}
The following example, which is a modification of the
counterexamples to Stroock's conjecture given in \cite{PStroock},
demonstrates that for $\gm=\gl_0$ there exists a subcritical operator $L_{\gl_0}$
and a potential $W>0$ satisfying all the properties of
Remark~\ref{remnoninv}.
\begin{example}\label{exnoninv}
Consider the operator $L:=-\gr\Gd$ on $\mathbb{R}^d$, where $d\geq
3$, and $\gr$ is a strictly positive smooth function. Let
$W:=\mathbf{1}$. Then $L$ is a subcritical operator in
$\mathbb{R}^d$, and it follows from Liouville's theorem that the
functions $v=\mathbf{1}$ and $\tilde{v}=1/\gr$ are (up to a
multiplicative constant) the unique positive solutions of the
equations $Lu=0$ and $L^\star u=0$ in $\mathbb{R}^d$, respectively. We
claim that there exists a smooth positive function $\gr$ so that
$v\tilde{v}=1/\gr\in L^1(\mathbb{R}^d,\dx)$, and
$\gl_0(L,\mathbf{1},\mathbb{R}^d)=0$.

Indeed, let $0<\gb<1$, and $x_k:=(k,0,\ldots,0)$, where, $k=1,2,
\ldots$\,. Finally let $\{\vge_k\}\subset (0,1)$ be a sequence
satisfying $\sum_{k=1}^\infty \vge_k^{d-(2+\gb)} <\infty$.
Take a smooth positive function $\tilde{v}\in L^1(\mathbb{R}^d,\dx)$
satisfying $\tilde{v}(x)\!\!\upharpoonright_{B(x_k,\vge_k)}=(\vge_k)^{-(2+\gb)}$. In particular,
$v\tilde{v}\in L^1(\mathbb{R}^d,\mathbf{1}\!\dx)$.

On the other hand,
we clearly have $\gl_0(L,\mathbf{1},B(x_k,\vge_k))<C \vge_k^{\gb}$, and therefore, $\gl_0(L,\mathbf{1},\mathbb{R}^d)=0$. Moreover, by Remark~\ref{remnoninv}, the unique positive solution
$v$ (resp.    $\tilde{v}$) of the equation
$L_{\gl_0}u=0$ (resp.    $L^\star _{\gl_0}u=0$) is not $\gl_0$-invariant.
\end{example}
\begin{remark}\label{remstroock}  We note that Example~\ref{exnoninv} is in fact a
strengthening of the counterexamples to Stroock's conjecture given
in \cite{PStroock}. It gives an example of a subcritical operator $L$ on
$\mathbb{R}^d$, $d\geq 3$, with $\gl_0=0$, such that the operators
$L$ and $L^\star $  do not admit $\gl_0$-invariant positive solutions,
and in addition, the product of positive entire solutions of the
equations $Lu=0$ and $L^\star u=0$ is in $L^1(\mathbb{R}^d)$. Recall that if a Schr\"odinger-type operator
admits a `small' positive solution $\psi$ in $\Gw$ (and in particular an $L^2$-positive solution), then the operator is critical in $\Gw$, and in particular, $\psi$ is an invariant positive solution \cite{P07}.
 \end{remark}

The following notions of small and semismall perturbations play a
fundamental role in criticality theory \cite{Mu86,Mu97,P88a,P89}. Semismall perturbations revisit in the present paper. It turns out that  they guarantee the compactness of the weighted Green operators in $L^p(\phi_p)$ for all $1\leq p\leq \infty$ (see Section~\ref{secanti}).

\begin{definition} \label{semispertdef1}
  Let $L$ be a subcritical operator in $\Gw$, and let $V$ be a potential.

({\em i}) We say that $V$ is a {\em  small perturbation}
 of $L$ in $\Omega$ if
\be \label{sperteq} \lim_{k\rightarrow \infty}\left\{\sup_{x,y\in
\Omega_{k}^\star }\int_{\Omega_{k}^\star }
\frac{\Green{\Omega}{L}{x}{z}|V(z)|
\Green{\Omega}{L}{z}{y}}{\Green{\Omega}{L}{x}{y}}\dnu(z)\right\}=0\,.
\end{equation}

({\em ii})  We say that $V$ is a {\em  semismall perturbation} of
$L$ in $\Omega$ if for some (all) fixed $x_0\in \Gw$ we have \be \label{semisperteq1} \lim_{k\rightarrow
\infty}\left\{\sup_{y\in \Omega_{k}^\star } \int_{\Omega_{k}^\star }
\frac{\Green{\Omega}{L}{x_0}{z}|V(z)|\Green{\Omega}{L}{z}{y}}
{\Green{\Omega}{L}{x_0}{y}}\dnu(z)\right\}=0\;.
\end{equation}
\end{definition}
\begin{remark}\label{remspert}
({\em i}) A small perturbation of $L$ in $\Omega$ is a semismall perturbation of $L$ and $L^\star$ in $\Omega$ \cite{Mu97}.

\medskip

({\em ii})  We note that $\gl_0$ is well defined by \eqref{eq_lambda_0} even if the potential $W$ does not have a definite sign. It turns out \cite{Mu97, P89} that
if $L$ is subcritical and $W \nleqslant
 0$ is a  semismall (resp.    small) perturbation of $L^\star $ in $\Gw$, then
$\gl_0 > 0$, and $L_{\gl_0}$ is critical in $\Gw$ with a  ground
state $\gf$. Moreover, for each $\gl<\gl_0$ such that the positive Green function $G^{\Omega}_{L_{\gl}}$ exists there exists a positive constant $C_{\gl,x_0,\vge}$ (resp.    $C_\gl$) such that
\begin{equation}
\begin{split}\label{eq_sp}
\!\!\!\!(C_{\gl,x_0,\vge})^{-1}\!\Green{\Omega}{L_{\gl}}{x}{x_0} \!\leq\! \gf(x) \!\leq \!
C_{\gl,x_0,\vge}\!\Green{\Omega}{L_{\gl}}{x}{x_0} &\quad \forall x\!\in\! \Gw, \mathrm{dist}(x,x_0)\!>\!\vge,\\[4mm]
\!\!\!\!\Big(\!\mbox{resp.    }(C_\gl)^{-1}\!\Green{\Omega}{L_{\gl}}{x}{y} \!\leq\! \Green{\Omega}{L}{x}{y} \!\leq\!
C_\gl\!\Green{\Omega}{L_{\gl}}{x}{y} &\quad \forall x,y\!\in\! \Gw, x\!\neq \!y \!\Big).
\end{split}
\end{equation}
Since $L_{\gl_0}$ is critical if and only if $L_{\gl_0}^\star$ is critical, \eqref{gseq} and \eqref{eq_sp} imply that if  $W > 0$ is a  semismall (resp.    small) perturbation of $L^\star $ in $\Gw$, then  $L_{\gl_0}$ is positive-critical. In particular, $\gf$ satisfies \eqref{gseq}.

\medskip

({\em iii}) Murata \cite{Mu07} proved that if $L$ is symmetric and the corresponding (Dirichlet) semigroup generated by $L$ is {\em intrinsically ultracontractive} on $L^2(\Gw)$ \cite{DS},
then $\mathbf{1}$ is a small perturbation of $L$ in $\Gw$. On the other hand, an example of
Ba\~{n}uelos and Davis in \cite{BD} gives us a finite area domain $\Gw\subset \R^2$ such that $\mathbf{1}$ is a
small perturbation of the Laplacian in $\Gw$, but the corresponding semigroup is not
intrinsically ultracontractive.
 \end{remark}
\subsection{Functional spaces}\label{ssectfunactional} Let $B$ be
a Banach space and $B^\star $ its dual. If $T:B\to B$ is a (bounded) operator
on $B$, we denote by $T^\star $ its dual, and the operator norm of $T$ by
$\|T\|_B$.  The range and the kernel of $T$ are denoted by
$R(T)$ and $N(T)$, respectively. We denote by $\gs(T)$,
$\gs_{\mathrm{point}}(T)$ and $\gr(T)$ the spectrum, the
point-spectrum, and the resolvent set of the operator $T$. If
$\gl\in\gr(T)$, then we denote by $R(\gl,T):=(\gl I-T)^{-1}$
the resolvent of $T$, where I is the identity map on $B$. For every $f \in B$ and $g^\star  \in B^\star $ we
use the notation $\langle g^\star ,f\rangle := g^\star (f)$.
If $T$ acts on two Banach spaces $X$ and $Y$, we distinguish the operators by using the notation $T\!\!\upharpoonright_X$, $T\!\!\upharpoonright_Y$,
respectively.

Let $1\leq p< \infty$,  and let $w$ be a fixed (strictly) positive measurable
weight function defined on $\Gw$. Denote the real ordered Banach
space
$$L^p(w):=L^p(\Gw,w^p\dnu)=\{u\mid\;uw\in L^p(\Gw,\dnu)\}$$ equipped with the
norm $$\|u\|_{p,w}:=\|uw\|_{p}=\left[\int_\Gw
|u(x)\,w(x)|^p\dnu(x)\right]^{1/p}.$$ For $p=\infty$, let
 $$L^\infty(w):=\{u\mid\;uw\in L^\infty(\Gw,\dnu)\}$$ equipped with the norm
 $$\|u\|_{\infty,w}:= \|uw\|_\infty=
\mathrm{ess}\, \sup_{\Gw}(|u|w).$$   The ordering on $L^p(w)$  is
the natural pointwise ordering of functions. For the purpose of
spectral theory, we consider also the canonical complexification
of $L^p(w)$ without changing our notation.

For $1\leq p\leq \infty$, let $p'$ be the usual conjugate exponent
of $p$, so, $\frac{1}{p}+\frac{1}{p'}=1$. It is well-known that for $1\leq p<\infty$,
$(L^p(w))^\star =L^{p'}(w^{-1})$, and in particular, the space $L^p(w)$
is reflexive for all $1<p<\infty$.

Let $W,\gf,\tilde{\gf}$ be positive continuous functions in $\Gw$.
For $1\leq p\leq \infty$, denote
\begin{equation}\label{eq:2.9}
\gf_p:=\gf^{-1}(\gf
W\tilde{\gf})^{1/p},  \qquad \tilde{\gf}_p:=\tilde{\gf}^{-1}(\gf
W\tilde{\gf})^{1/p},
\end{equation}
and consider the corresponding family of weighted Lebesgue spaces $L^p(\gf_p)$, and $L^p(\tilde{\gf}_p)$.

We note that $L^1(\gf_1)$ is independent of $\gf$ while $L^\infty(\gf_\infty)$ is independent of $\tilde{\gf}$ and $W$. Moreover, if $\tilde{\gf}=\gf$ (which is often the case when $L$ is symmetric), then
$L^2(\gf_2)=L^2(\Gw,W\dnu)$ and this space is $\gf$~independent.

It can be easily checked that for $1\leq p< \infty$ we have
\be\label{duality} (L^p(\gf_p))^\star =L^{p'}(\tilde{\gf}_{p'}),
 \ee
where the pairing between
  $L^p(\gf_p)$ and $L^{p'}(\tilde{\gf}_{p'})$  is given by
$$<g^\star ,f>= \int_{\Omega}g^\star (x)W(x)f(x)\dnu(x)
\qquad \forall g^\star \in L^{p'}(\tilde{\gf}_{p'}), f\in L^p(\gf_p).$$
Here the duality is provided by the bilinear rather than the
sesquilinear form\footnote{This is not essential, but simplifies
somewhat the calculations.}.

\medskip

Suppose now that
 \be\label{poscrit}
 \gf W\tilde{\gf}\in\ L^1(\Gw,\dnu),
\qquad \int_\Gw\gf(x) W(x)\tilde{\gf}(x)\dnu(x)=1.
 \ee
 Then by the
H\"older inequality we have the continuous embeddings
 \be\label{imbedding}
 L^\infty(\gf_\infty)\subset L^q(\gf_q) \subset L^p(\gf_p) \subset
L^1(\gf_1), \ee for all $1\leq p\leq q\leq \infty$, and for $f\in
L^\infty(\gf_\infty)$ we have \be\label{norminbedding}
\|f\|_{1,W\tilde{\gf}}=\|f\|_{1,\gf_1} \leq\|f\|_{p,\gf_p} \leq
\|f\|_{q,\gf_q} \leq \|f\|_{\infty,\gf_\infty}=
\|f\|_{\infty,\gf^{-1}}.
 \ee
Moreover, $\|f\|_{1,\gf_1} = \|f\|_{\infty,\gf_\infty}=1$ if and only if $|f|=\gf=1$ almost everywhere.
In particular, $\gf\in L^p(\gf_p)$ for every $1\leq p \leq \infty$, and \eqref{poscrit} implies that
 \be\label{normgf}
\|\gf\|_{1,\gf_1}
=\|\gf\|_{p,\gf_p}=\|\gf\|_{\infty,\gf_\infty}=1 \qquad \forall\, 1\leq p\leq \infty, \ee
so, the norms of the embeddings in \eqref{imbedding} is $1$.
Moreover, these embeddings are dense. We also note that if  $\tilde{\gf}=\gf$ (as in the symmetric case), then
\be\label{simbedding} (L^1(W\gf))^\star =L^\infty(\gf^{-1})\subset
L^2(\Gw, W\dnu) \subset L^1(W\gf). \ee
\begin{remark}\label{rem_unique}
Throughout the paper we fix an operator $L$ of the form \eqref{div_L}, a positive potential $W$, $\gm\leq\gl_0$, and $\gf$, $\tilde{\gf}$ two positive solutions of the
equations $L_\gm u=0$ and $L^\star_\gm u=0$ in $\Gw$, respectively.  We study properties of a family of the corresponding weighted Green operators on $L^p(\gf_p)$ and $L^p(\tilde{\gf}_p)$. We note that if $\gm=\gl_0$ and $L_{\gl_0}$ is critical in $\Gw$, then the spaces $L^p(\gf_p)$ and $L^p(\tilde{\gf}_p)$ are uniquely defined.
 \end{remark}
\begin{remark}\label{rem_htrans}
Let $\gf$ and $\tilde{\gf}$ be two fixed positive solutions of the
equations $L u=0$ and $L^\star u=0$ in $\Gw$, respectively.
For $\gm\leq \gl_0$, define the operator
$$L_\gm^\gf:= \frac{1}{\gf}L_\gm\gf= \frac{1}{\gf}L\gf -\gm W=L^\gf-\gm W,$$
which is called {\em Doob's $\gf$-transform} (or the {\em ground state transform with respect to} $\gf$) of the operator $L_\gm$.
Note that for $\gm\leq \gl_0$ the operator ${L_\gm}$ is subcritical in $\Gw$ if and only if $L^\gf_\gm$ is subcritical in $\Gw$, and we have
$$\Green{\Omega}{L^\gf_\gm}{x}{y}=\frac{1}{\gf(x)}\Green{\Omega}{L_\gm}{x}{y}\gf(y).$$
Clearly, $L^\gf \mathbf{1} =0$ and $(L^\gf)^\star  (\gf \tilde{\gf})=0$. In particular, $L^\gf$ is a {\em diffusion operator}.
We note that for $1\leq p\leq \infty$, the weighted $L^p$-spaces associated with the positive solutions $\mathbf{1}$
and $\gf\tilde{\gf}$ of the equations $L^\gf u=0$ and
$(L^\gf)^\star u=0$, respectively, are just $L^p(\Gw,\gf W \tilde{\gf}\dnu)$. So, in this case (which corresponds to the class of diffusion operators) the corresponding one-parameter weights are $p$-independent.
 \end{remark}
\section{Boundedness of the Green operators}\label{secgreen}
Fix a positive potential $W$ and  $\gm\leq \gl_0$. Let $\gf$ and $\tilde{\gf}$ be two fixed positive solutions of the equations $L_{\gm}u=0$ and $L^\star _{\gm}u=0$ in $\Gw$,
respectively. For $1\leq p\leq \infty$ let  $\gf_p$ and $\tilde{\gf}_p$ be the functions defined in \eqref{eq:2.9}.
Note that we do not assume below neither that
$\gf$ and $\tilde{\gf}$ are invariant solutions nor that the integrability condition \eqref{poscrit} is
satisfied.

For $\gl<\gm$, we introduce the integral operators
$$\mathcal{G}_\gl f(x) \!:= \!\!\! \int_{\Omega}\!\!\!  \Green{\Omega}{L_\gl
}{x}{y}W(y)f(y)\!\dnu(y),\;\; \mathcal{G}_\gl^\odot f(y) \!:=\!\!\!
 \int_{\Omega}\!\!\!  \Green{\Omega}{L_\gl}{x}{y}W(x)f(x)\!\dnu(x).$$
In the present section we study for $1\leq p\leq \infty$ the boundedness of the {\em weighted Green operators} $\mathcal{G}_\gl$ and $\mathcal{G}_\gl^\odot$ on
$L^p(\gf_p)$ and $L^p(\tilde{\gf}_p)$, respectively. We have
\begin{theorem}\label{thmbddgen0}
Let $L$ be an elliptic operator on $\Gw$ of the form \eqref{div_L},
and let $W$ be a positive potential. Fix  $\gm\leq
\gl_0$, and let $\gf$ and $\tilde{\gf}$ be two fixed positive
solutions of the equations $L_{\gm}u=0$ and $L^\star _{\gm}u=0$ in
$\Gw$, respectively.  Then

\begin{enumerate}
\item For $1\leq p\leq\infty$, the operator
$\mathcal{G}_\gl\!\!\upharpoonright_{L^p(\gf_p)}$ (resp.,
$\mathcal{G}_\gl^\odot\!\!\upharpoonright_{L^p(\tilde{\gf}_p)}$) is a well defined bounded and
positive improving operator on $L^p(\gf_p)$ (resp.    $L^p(\tilde{\gf}_p)$).
 Moreover, we have \be \label{gnormgen}
\|\mathcal{G}_\gl\|_{L^p(\gf_p)}\leq (\gm-\gl)^{-1}, \qquad
\mbox{(resp.    } \|\mathcal{G}_\gl^\odot\|_{L^p(\tilde{\gf}_p)}\leq
(\gm-\gl)^{-1} \mbox{ )}.
 \ee

\item For $1\leq p< \infty$, the operator
$\mathcal{G}_\gl^\odot\!\!\upharpoonright_{L^{p'}(\tilde{\gf}_{p'})}$ is the dual
operator of $\mathcal{G}_\gl\!\!\upharpoonright_{L^{p}(\gf_{p})}$, and
$\mathcal{G}_\gl\!\!\upharpoonright_{L^{p'}(\gf_{p'})}$ is the dual
 of $\mathcal{G}_\gl^\odot\!\!\upharpoonright_{L^{p}(\tilde{\gf}_{p})}$.
\item Suppose that  $\gf$ is a $\gm$-invariant positive solution of the operator $L$, and $p=\infty$, then $\|\mathcal{G}_\gl\|_{L^\infty(\gf_\infty)}= (\gm-\gl)^{-1}$.

\item Suppose that  $\gf$ is a $\gm$-invariant positive solution  of the operator $L$ satisfying \eqref{poscrit}, then  for any $1\leq p\leq \infty$, $\|\mathcal{G}_\gl\|_{L^p(\gf_p)}= (\gm-\gl)^{-1}$.
\end{enumerate}
\end{theorem}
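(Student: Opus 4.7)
The plan is to take the two integral inequalities of \eqref{gseqsc} as the only analytic input and reduce every assertion to them, via a Jensen/Fubini argument together with the eigen-relation that holds when $\gf$ is $\gm$-invariant. Part~(1) is the only substantial step; parts~(2)--(4) are formal consequences.

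For part~(1), fix $\gl<\gm$ and observe that \eqref{gseqsc} says that for every $x\in\Gw$ the positive measure
\[
dk_x(y):=(\gm-\gl)\gf(x)^{-1}\Green{\Omega}{L_\gl}{x}{y}W(y)\gf(y)\dnu(y)
\]
has total mass $\leq 1$ on $\Gw$, and that $\mathcal{G}_\gl f(x)=\gf(x)(\gm-\gl)^{-1}\int_\Gw (f/\gf)\,dk_x$. The case $p=\infty$ follows at once from the pointwise bound $|f/\gf|\leq\|f\|_{\infty,\gf_\infty}$. For $1\leq p<\infty$ I would apply Jensen's inequality on the sub-probability measure $dk_x$ to the function $f/\gf$ and the convex map $t\mapsto|t|^p$, obtaining $|\mathcal{G}_\gl f(x)|^p\leq \gf(x)^p(\gm-\gl)^{-p}\int_\Gw |f/\gf|^p\,dk_x$ pointwise in $x$. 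Then multiply by $\gf_p(x)^p=\gf(x)^{1-p}W(x)\tilde\gf(x)$, integrate in $\dnu(x)$, swap the order of integration by Tonelli, and apply the \emph{second} inequality of \eqref{gseqsc} to the inner integral $\int_\Gw W(x)\tilde\gf(x)\Green{\Omega}{L_\gl}{x}{y}\dnu(x)\leq \tilde\gf(y)/(\gm-\gl)$. Collecting powers of $(\gm-\gl)$ produces exactly $(\gm-\gl)^{-p}\|f\|_{p,\gf_p}^p$, which is \eqref{gnormgen}. The proof for $\mathcal{G}_\gl^\odot\!\!\upharpoonright_{L^p(\tilde\gf_p)}$ is completely symmetric, exchanging the roles of $\gf,\tilde\gf$ and of the two inequalities in \eqref{gseqsc}. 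Positivity improvement is immediate, since $G^{\Omega}_{L_\gl}(x,y)>0$ and $W>0$ everywhere in $\Gw$.

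For part~(2), the bilinear forms $(f,g)\mapsto\int_\Gw g(x)W(x)\mathcal{G}_\gl f(x)\dnu(x)$ and $(f,g)\mapsto\int_\Gw \mathcal{G}_\gl^\odot g(y)W(y)f(y)\dnu(y)$ coincide on $L^p(\gf_p)\times L^{p'}(\tilde\gf_{p'})$ by Fubini, which I would justify by first replacing $f,g$ with $|f|,|g|$ to obtain absolute convergence via the bounds of part~(1) and the duality \eqref{duality}. This identifies $\mathcal{G}_\gl^\odot\!\!\upharpoonright_{L^{p'}(\tilde\gf_{p'})}$ as the Banach dual of $\mathcal{G}_\gl\!\!\upharpoonright_{L^p(\gf_p)}$, and symmetrically for the other pair.

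For parts~(3) and~(4), $\gm$-invariance of $\gf$ is by definition the equality case in the first line of \eqref{gseqsc}, which is precisely the eigen-relation $\mathcal{G}_\gl\gf=(\gm-\gl)^{-1}\gf$. For~(3), $\|\gf\|_{\infty,\gf_\infty}=\sup(\gf\cdot\gf^{-1})=1$, so this identity forces $\|\mathcal{G}_\gl\|_{L^\infty(\gf_\infty)}\geq(\gm-\gl)^{-1}$, matching the upper bound from~(1). For~(4), the normalization \eqref{poscrit} gives $\|\gf\|_{p,\gf_p}=1$ for every $1\leq p\leq\infty$ by \eqref{normgf}, so the same eigen-relation attains the bound of~(1) in every $L^p(\gf_p)$. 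The main obstacle is the Jensen estimate in part~(1): once one recognizes $dk_x$ as a sub-Markov kernel on $\Gw$, every $L^p$ bound drops out uniformly. Without this device one would interpolate between the easy endpoints $p=1$ (Tonelli against $W\tilde\gf$) and $p=\infty$ (direct), using the real interpolation property of $\{L^p(\gf_p)\}$ noted in the introduction.
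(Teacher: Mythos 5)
Your argument is correct, and it takes a genuinely different route from the paper's proof of part~(1). The paper establishes the endpoint bounds for $p=1$ (Tonelli--Fubini against $W\tilde\gf$, exactly as you describe) and $p=\infty$ (the direct pointwise bound), and then invokes Stein's Riesz--Thorin-type interpolation theorem with change of weights \cite[Theorem~2]{St} to fill in $1<p<\infty$. You instead treat all $1\le p<\infty$ in one stroke: you observe that
\[
\mathrm{d}k_x(y):=(\gm-\gl)\,\gf(x)^{-1}\Green{\Omega}{L_\gl}{x}{y}W(y)\gf(y)\dnu(y)
\]
is a sub-Markov kernel by the first line of \eqref{gseqsc}, apply Jensen with $\Phi(t)=|t|^p$ (legitimate on sub-probability measures because $\Phi(0)=0$ and $\Phi$ is convex), and then integrate out $x$ against $\gf_p(x)^p\dnu=\gf^{1-p}W\tilde\gf\dnu$ using Tonelli and the second line of \eqref{gseqsc}. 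Tracking powers of $(\gm-\gl)$ this gives exactly \eqref{gnormgen}. This is the weighted Schur test, which the paper itself records as an alternative route in Remark~\ref{rem_schurtest1}, citing \cite[Lemma~5.1]{KVZ} and noting that it is essentially Aronszajn's theorem (and in fact a corollary of Stein's theorem). What your version buys is self-containedness: instead of importing a weighted interpolation theorem you reduce directly to the two inequalities of \eqref{gseqsc} plus elementary convexity and Tonelli; what the paper's version buys is that the same Stein interpolation machinery, and more importantly its compactness-preserving cousin \cite[Theorem~1.1]{Cwikel}, is reused verbatim later in Section~\ref{secanti}, so setting it up here is not wasted effort. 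Your treatment of parts~(2)--(4) agrees with the paper's: (2) is the Fubini identification of the transpose bilinear form under the pairing \eqref{duality}, and (3)--(4) combine the upper bound of~(1) with the eigen-relation $\mathcal{G}_\gl\gf=(\gm-\gl)^{-1}\gf$ coming from $\gm$-invariance (equality in \eqref{gseqsc}, i.e.\ \eqref{gseq}), using $\|\gf\|_{\infty,\gf_\infty}=1$ for~(3) and \eqref{normgf} for~(4).
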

\begin{proof}
(1)  Let $f\in L^\infty(\gf_\infty)$, then by \eqref{gseqsc}
\begin{multline}\label{gwfsc1}
  |\mathcal{G}_\gl f(x)|\leq \int_{\Omega}\Green{\Omega}{L_\gl}{x}{y}W(y)|f(y)|\dnu(y)\leq\\
\|f\|_{\infty,\gf_\infty}\int_{\Omega}\Green{\Omega}{L_\gl}{x}{y}W(y)\gf(y)\dnu(y)
\leq\frac{\|f\|_{\infty,\gf_\infty}}{\gm-\gl}\gf(x),
\end{multline}
so,  $\|\mathcal{G}_\gl\|_{L^\infty(\gf_\infty)}\leq
(\gm-\gl)^{-1}$. Similarly,
$\|\mathcal{G}^\odot_\gl\|_{L^\infty(\tilde{\gf}_\infty)}\leq
(\gm-\gl)^{-1}$.

Assume now that $f\in L^1(\gf_1)$, then by the Tonelli-Fubini theorem and \eqref{gseqsc} we obtain
\begin{multline}\label{gwf7}
  \|\mathcal{G}_\gl f(x)\|_{1,\gf_1}=
  \int_\Gw W(x)\tilde{\gf}(x)\left|\int_{\Omega}\Green{\Omega}{L_\gl}{x}{y}W(y)f(y)
  \dnu(y)\right|\dnu(x)\leq\\[2mm]
\int_\Gw W(x)\tilde{\gf}(x)\int_{\Omega}\Green{\Omega}{L_\gl}{x}{y}W(y)|f(y)|
  \dnu(y)\dnu(x) = \\[2mm]
\int_{\Omega}\left(\int_\Gw
W(x)\tilde{\gf}(x)\Green{\Omega}{L_\gl}{x}{y}\dnu(x)\right) W(y)|f(y)|
  \dnu(y)\leq\\[2mm]
  \frac{1}{\gm-\gl}\int_{\Omega}\tilde{\gf}(y) W(y)|f(y)|
  \dnu(y)=\frac{\|f\|_{1,\gf_1}}{\gm-\gl}\,.
\end{multline}
Hence, $\|\mathcal{G}_\gl\|_{L^1(\gf_1)}\leq (\gm-\gl)^{-1}$. Similarly,
$\|\mathcal{G}^\odot_\gl\|_{L^1(\tilde{\gf}_1)}\leq
(\gm-\gl)^{-1}$.

For $1<p<\infty$, the boundedness of
$\mathcal{G}_\gl\!\!\upharpoonright_{L^p(\gf_p)}$ with norm estimate
$\|\mathcal{G}_\gl\|_{L^p(\gf_p)}\leq (\gm-\gl)^{-1}$ follows now
directly from a Riesz-Thorin-type interpolation theorem with
weights proved by Stein \cite[Theorem~2]{St}.

 \vspace{.1cm}

(2) The duality claim follows now directly from
\eqref{duality}.

\vspace{.1cm}

(3) and (4) follow from (1) and \eqref{gseq}.
\end{proof}
\begin{remark}\label{rem_schurtest1}
Theorem~\ref{thmbddgen0} (and \eqref{gnormgeneqp}) for $1<p<\infty$ follows also from the Schur test with weights \cite[Lemma~5.1]{KVZ}.
Indeed, set $$K(x,y):= \frac{G(x,y)}{\phi^{1-p}(y)\tilde{\phi}(y)}\,,\quad w(x,y):=\frac{\phi^{p}(y)}{\phi^{p}(x)}\,,\quad \drho(y):=\left(\phi_{p}(y)\right)^{p}\dnu(y).$$
Then \eqref{gseqsc} implies that
\begin{equation}\begin{split}\label{gseqscp}
 \displaystyle{\int_{\Omega}\!\!w(x,y)^{\frac{1}{p}}K(x,y)\drho(y)}\!=\!
 \frac{\int_{\Omega}\Green{\Omega}{L_\gl}{x}{y}W(y)\phi(y)\dnu(y)}{\phi(x)}
\!\leq\! \displaystyle{\frac{1}{\gm-\gl}} & \quad\forall x\in \Gw, \\[5mm]
\displaystyle{ \int_{\Omega}\!\!w(x,y)^{-\frac{1}{p'}}K(x,y)\drho(x)}\!=\!
\frac{ \int_{\Omega}\Green{\Omega}{L_\gl}{x}{y}W(x)\tilde{\phi}(x)\dnu(x)}{\tilde{\phi}(y)}
\!\leq\! \displaystyle{\frac{1}{\gm-\gl}} & \quad\forall y\in \Gw.
  \end{split}
 \end{equation}
Applying the aforementioned Schur test we get $\|\mathcal{G}_\gl\|_{L^p(\gf_p)}\leq (\gm-\gl)^{-1}$.

The Schur test with weights is essentially a theorem of Aronszajn, and in fact follows from Stein's Riesz-Thorin-type interpolation theorem with
weights \cite[Theorem~2]{St}.
\end{remark}

\begin{remark}\label{rem_pos_crt}
It follows from part (ii) of Theorem~\ref{thmbddgen1} that the assumptions of part (4) of Theorem~\ref{thmbddgen0} imply that in fact $\gm=\gl_0$ and $L_{\gl_0}$ is positive-critical in $\Gw$ with respect to $W$.
\end{remark}
\begin{remark}\label{rempindepnd}
The norm estimate \eqref{gnormgen} does not depend on  $\gf$, $\tilde{\gf}$ and $W$ and $p$.
\end{remark}
\begin{remark}\label{rem_nonegative}
The requirement that $W$ is strictly positive can be weakened, and
Theorem~\ref{thmbddgen0}  holds in a slightly weaker sense if $W$
is a nonzero nonnegative function. Indeed, let $1\leq p\leq
\infty$. Since Stein's Riesz-Thorin-type interpolation theorem with
weights \cite[Theorem~2]{St} holds for nonnegative weights,
we have for $\gl<\gm$
 \be\label{wgeq0} \|(\mathcal{G}_\gl f)
\gf_p\|_{L^p(\Gw,\dnu)}\leq \frac{1}{\gm-\gl}\|f
\gf_p\|_{L^p(\Gw,\dnu)}\qquad \forall f \mbox{ s.t. } f\gf_p\in
L^p(\Gw,\dnu). \ee
 \end{remark}
\section{Principal eigenfunction}\label{sec_principal}
The  Krein-Rutman theorem roughly asserts that if $T$ is a compact operator defined on a Banach space $X$ with a total cone $P$ such that $T$ is positive improving and its spectral radius $r(T)$ is strictly positive, then $T$  admits a positive eigenfunction with an eigenvalue $r(T)$. Moreover, under an irreducibility assumption $r(T)$ is simple.  The weighted Green operator $\mathcal{G}_\gl$ in the weighted Lebesgue spaces $L^p(\gf_p)$ is positive improving but in general, $\mathcal{G}_\gl$ is not compact. Nevertheless, under some further conditions it admits a positive eigenfunction with an eigenvalue $(\gm-\gl)^{-1}$.

Throughout the present section, as in Section~\ref{secgreen}, $W$ is a fixed positive potential, $\gm\leq \gl_0$, and $\gf$, $\tilde{\gf}$ are fixed positive solutions
of the equations $L_{\gm}u=0$ and $L^\star _{\gm}u=0$ in $\Gw$, respectively. We study eigenvalues and eigenfunctions of the weighted Green operators  $\mathcal{G}_\gl$.
\begin{remark}\label{rem9}
Eigenfunctions of $\mathcal{G}_\gl\!\!\upharpoonright_{L^1(\gf_1)}$ might be not smooth enough to solve weakly the corresponding partial differential equation. Therefore, if $p=1$, we always {\em assume} that such eigenfunctions are also in $ L^q_{\mathrm{loc}}(\Gw)$ for some $q>1$.
 \end{remark}
We  have
\begin{theorem}\label{thmbddgen}
Let $W$, $\gm$, $\gf$, and $\tilde{\gf}$ be as above, and let $\gl<\gm$. Then
for any $1\leq  p\leq \infty$, zero is not an eigenvalue of the
operators $\mathcal{G}_\gl\!\!\upharpoonright_{L^p(\gf_p)}$ and
$\mathcal{G}_\gl^\odot\!\!\upharpoonright_{L^p(\tilde{\gf}_p)}$.

\vspace{.3cm}

\noindent Moreover, any eigenfunction $\vgf$ (resp.    $\tilde{\vgf}$) of
$\mathcal{G}_\gl\!\!\upharpoonright_{L^p(\gf_p)}$  (resp.
$\mathcal{G}_\gl^\odot\!\!\upharpoonright_{L^p(\tilde{\gf}_p)}$) with an eigenvalue
$\gn$ solves the equation $$(L-[\gl +(\gn)^{-1}]W)\vgf=0 \qquad
\mbox{(resp.    } (L^\star -[\gl +(\gn)^{-1}]W)\tilde{\vgf}=0\mbox{)}
\quad \mbox{in } \Gw.$$
\end{theorem}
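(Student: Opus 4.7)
The plan is to exploit the defining property of the Green function: for any $f\in L^p(\gf_p)$, the function $u:=\mathcal{G}_\gl f$ should be a weak solution in $\Gw$ of the Poisson-type equation $L_\gl u=Wf$. First I would verify that $u$ is locally integrable: by Theorem~\ref{thmbddgen0}, $u\in L^p(\gf_p)$, and since $\gf$, $\tilde{\gf}$, $W$ are positive and continuous, the weight $\gf_p$ is locally bounded both above and away from zero, so $u\in L^1_\loc(\Gw)$; the same reasoning gives $f,\,Wf\in L^1_\loc(\Gw)$. Testing $u$ against an arbitrary $\psi\in\core$ and using Tonelli--Fubini together with the defining property of $G^\Gw_{L_\gl}(\cdot,y)$ as the Green function of $L_\gl$ yields the distributional identity
\[
\int_{\Gw} u(x)\,L_\gl^\star \psi(x)\dnu(x)=\int_{\Gw} W(y)f(y)\psi(y)\dnu(y)\qquad\forall \psi\in\core,
\]
which says that $L_\gl u=Wf$ weakly in $\Gw$.

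Both assertions then follow quickly. For the first, suppose $\mathcal{G}_\gl f\equiv 0$ for some $f\in L^p(\gf_p)$; the displayed identity forces $\int_\Gw Wf\psi\dnu=0$ for every $\psi\in\core$, hence $Wf=0$ almost everywhere, and the strict positivity of $W$ gives $f=0$. For the second, let $\vgf\in L^p(\gf_p)$ be an eigenfunction of $\mathcal{G}_\gl$ with eigenvalue $\gn$; by the first part $\gn\neq 0$, so $\vgf=\gn^{-1}\mathcal{G}_\gl \vgf$, and applying $L_\gl$ weakly together with $L_\gl(\mathcal{G}_\gl \vgf)=W\vgf$ yields
\[
L_\gl \vgf=\gn^{-1}W\vgf,\qquad\text{i.e.,}\qquad (L-[\gl+\gn^{-1}]W)\vgf=0\mbox{ in }\Gw.
\]
The corresponding statements for $\mathcal{G}_\gl^\odot$ on $L^p(\tilde{\gf}_p)$ follow by the same argument with $L_\gl$ replaced by $L_\gl^\star$.

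The main technical obstacle is the regularity step underlying the identification $L_\gl(\mathcal{G}_\gl f)=Wf$ as a genuine \emph{weak} PDE: to conclude that the eigenfunction $\vgf=\gn^{-1}\mathcal{G}_\gl\vgf$ is a $W^{1,2}_\loc$ weak solution of the eigenvalue equation, rather than merely a distributional one, one needs adequate local regularity of the source $W\vgf$. For $1<p\leq\infty$ this is furnished automatically by standard elliptic regularity applied to the Green-function representation (via Calder\'{o}n--Zygmund-type estimates), whereas for $p=1$ this is precisely the content of the auxiliary hypothesis recorded in Remark~\ref{rem9}.
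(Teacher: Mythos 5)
Your argument is correct and captures the essential mechanism, but it routes through a slightly different implementation than the paper. You work directly with the minimal Green function $G^\Gw_{L_\gl}$ on the noncompact $\Gw$, invoking its fundamental-solution property $\int_\Gw G^\Gw_{L_\gl}(x,y)\,L_\gl^\star\psi(x)\dnu(x)=\psi(y)$ to read off the distributional identity $L_\gl(\mathcal{G}_\gl f)=Wf$ after a Fubini exchange. The paper instead defines $\vgf_k(x):=\int_{\Gw_k}G^{\Gw_k}_{L_\gl}(x,y)W(y)\vgf(y)\dnu(y)$, uses the pointwise domination $G^{\Gw_k}_{L_\gl}\leq G^{\Gw}_{L_\gl}$ together with $G^\Gw_{L_\gl}(x,\cdot)W\vgf\in L^1$ (from Theorem~\ref{thmbddgen0}) to deduce $\vgf_k\to\gn\vgf$ by dominated convergence, observes that each $\vgf_k$ solves $(L-\gl W)\vgf_k=W\vgf$ on $\Gw_k$ as an honest Dirichlet problem, and concludes by local elliptic estimates. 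The difference is more than cosmetic: the fundamental-solution identity you invoke for the minimal Green function on a noncompact domain is itself proved by exactly the same exhaustion-plus-monotone-convergence argument, so the paper's route makes explicit what your sketch defers to background. Conversely, your route is shorter if one is willing to take that fact as known, and your Fubini step does go through because $\mathcal{G}_\gl|f|\in L^p(\gf_p)\subset L^1_\loc$ supplies the needed local integrability of the double integral. One small caution: you should be explicit that for $p=\infty$, since $C_0^\infty(\Gw)$ is not dense in $L^\infty(\gf_\infty)$, the injectivity argument cannot proceed by density but only by the distributional identity itself, which is what you in fact use; and the remark about Calder\'on--Zygmund bootstrapping for $1<p\leq\infty$ is the right intuition, although the paper phrases the same point simply as ``standard elliptic regularity'' applied to the approximating solutions $\vgf_k$, which is a touch cleaner because each $\vgf_k$ is already a weak solution by construction.
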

\begin{proof}
Let $1\leq p\leq \infty$ and let $\vgf\in L^p(\gf_p)$,
$\|\vgf\|_{p,\gf_p}=1$ be an eigenfunction of the operator
$\mathcal{G}_\gl\!\!\upharpoonright_{L^p(\gf_p)}$ with an eigenvalue $\gn$, and
define
$$\vgf_k(x):=\int_{\Gw_k}
\Green{\Omega_{k}}{L_\gl}{x}{y}W(y)\vgf(y)\dnu(y)\qquad x\in \Gw_k, \;k\geq 1.$$ Clearly,
$$\Green{\Omega_{k}}{L_\gl}{x}{y}W(y)|\vgf(y)|\leq
\Green{\Omega}{L_\gl}{x}{y}W(y)|\vgf(y)|\qquad \mbox{in } \Gw_k.$$
 On the other hand, by Theorem~\ref{thmbddgen0} we have
$\Green{\Omega}{L_\gl}{x}{\cdot}W\vgf \in L^1(\Gw,\dnu)$ for almost
every $x\in \Gw$. Therefore, $\vgf_k(x)$ is well-defined almost
everywhere in $\Gw_k$, and Lebesgue's dominated convergence theorem implies
that
$$\lim_{k\to\infty}\vgf_k(x)=\int_{\Gw}
\Green{\Omega}{L_\gl}{x}{y}W(y)\vgf(y)\dnu(y)=\gn\vgf(x)$$ almost everywhere in $\Gw$.

Since $|\vgf|\in L^p(\gf_p)$, Theorem~\ref{thmbddgen0} implies that
$\mathcal{G}_\gl|\vgf|\in L^p(\gf_p)$.  Obviously,  $|\vgf_k|\leq
\mathcal{G}_\gl|\vgf|\in L^p(\gf_p)$. Consequently, $\{\vgf_k\}$
is bounded in $L^p(\gf_p)$. Note that for $1\leq p<\infty$, Lebesgue's
dominated convergence theorem implies that
$\|\vgf_k\|_{L^p(\gf_p)} \to |\gn|\|\vgf\|_{L^p(\gf_p)}$, and this holds true also for $p=\infty$.

On the other hand,  taking into account Remark~\ref{rem9} in case $p=1$, it follows that each $\vgf_k$ solves the equation
$$(L-\gl W)\vgf_k=W\vgf \qquad  \mbox{in } \Gw_k.$$
A standard elliptic regularity argument implies that
 $\gn\vgf$ solves the equation
$$(L-\gl W)\gn\vgf=W\vgf\neq 0\qquad  \mbox{in } \Gw.$$
In particular, $\gn\neq
0$. Thus, $\vgf$ solves the equation $$(L-[\gl+(\gn)^{-1}]W)\vgf=0
\qquad  \mbox{in } \Gw.$$
\end{proof}
\begin{remark}\label{rem_c}
It was proved in \cite{P99} that zero is not an eigenvalue of
$\mathcal{G}_\gl\!\!\upharpoonright_{C(\gf_\infty)}$.
 \end{remark}
The next result concerns conditions under which the positive solution $\gf$ is an eigenfunction of $\mathcal{G}_\gl$ in $L^p(\gf_p)$.
\begin{theorem}\label{thmbddgen1}
Let $W$, $\gm$, $\gf$, and $\tilde{\gf}$ be as above, and let $\gl<\gm$. Then

\vspace{.3cm}

(i) The function $\gf$ (resp.    $\tilde{\gf}$) is a nonnegative
eigenfunction of the operator
$\mathcal{G}_\gl\!\!\upharpoonright_{L^\infty(\gf_\infty)}$ (resp.,
$\mathcal{G}_\gl^\odot\!\!\upharpoonright_{L^\infty(\tilde{\gf}_\infty)}$) with an
eigenvalue $(\gm-\gl)^{-1}$ if and only if  $\gf$ (resp.
$\tilde{\gf}$) is a $\gm$-invariant positive solution with respect
to $L$ (resp.    $L^\star $). In this case
 \begin{multline} \label{gnormgeneq}
\|\mathcal{G}_\gl\|_{L^\infty(\gf_\infty)}=\|\mathcal{G}_\gl^\odot\|_{L^1(\tilde{\gf}_1)}
=(\gm-\gl)^{-1}, \\[4mm]
\mbox{ (resp.    }
\|\mathcal{G}_\gl^\odot\|_{L^\infty(\tilde{\gf}_\infty)}=\|\mathcal{G}_\gl\|_{L^1(\gf_1)}
=(\gm-\gl)^{-1} \mbox{ )}. \end{multline}
 Furthermore, if $\gf$
and $\tilde{\gf}$ are both $\gm$-invariant positive solutions,
then for $1\leq p\leq\infty$ \be \label{gnormgeneqp}
\|\mathcal{G}_\gl\|_{L^p(\gf_p)}=
\|\mathcal{G}_\gl^\odot\|_{L^p(\tilde{\gf}_p)} =(\gm-\gl)^{-1}.
 \ee
 
(ii) Let $1\leq p< \infty$. Then $\gf$ (resp.    $\tilde{\gf}$) is a
nonnegative  eigenfunction of the operator
$\mathcal{G}_\gl\!\!\upharpoonright_{L^p(\gf_p)}$ (resp.,
$\mathcal{G}_\gl^\odot\!\!\upharpoonright_{L^p(\tilde{\gf}_p)}$) with an eigenvalue
$\gn=(\gm-\gl)^{-1}$ if and only if  $\gm=\gl_0$, and the operator
$L_{\gl_0}$ is positive-critical with respect to $W$. In this case,
$\gf$ and $\tilde{\gf}$ are the ground states of $L_{\gl_0}$ and $L_{\gl_0}^\star $, respectively,
and \be \label{gnormgeneqp9}
\|\mathcal{G}_\gl\|_{L^p(\gf_p)}=
\|\mathcal{G}_\gl^\odot\|_{L^p(\tilde{\gf}_p)} =\frac{1}{\gl_0-\gl} \qquad \forall \;1\leq p\leq \infty.
 \ee
\end{theorem}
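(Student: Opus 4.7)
For Part (i), the biconditional for the eigenfunction is immediate from the definitions: $\gf$ always lies in $L^\infty(\gf_\infty)$ with $\|\gf\|_{\infty,\gf_\infty}=1$ (since $\gf\cdot\gf_\infty=\mathbf{1}$), and the identity $\mathcal{G}_\gl\gf = (\gm-\gl)^{-1}\gf$ is literally the equality case of the first inequality in \eqref{gseqsc}, which by definition characterizes $\gm$-invariance of $\gf$; the analogue for $\tilde{\gf}$ and $\mathcal{G}_\gl^\odot$ is symmetric. The $L^\infty$ norm equality in \eqref{gnormgeneq} then follows by combining the ratio $\|\mathcal{G}_\gl\gf\|_{\infty,\gf_\infty}/\|\gf\|_{\infty,\gf_\infty}=(\gm-\gl)^{-1}$ with the upper bound of Theorem~\ref{thmbddgen0}(1), and the companion $L^1$ equality is obtained by duality, since Theorem~\ref{thmbddgen0}(2) asserts that $\mathcal{G}_\gl^\odot$ on $L^1(\tilde{\gf}_1)$ is adjoint to $\mathcal{G}_\gl$ on $L^\infty(\gf_\infty)$ and hence the two share the same operator norm.

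When both $\gf$ and $\tilde{\gf}$ are $\gm$-invariant (the hypothesis of \eqref{gnormgeneqp}), the previous paragraph yields $\|\mathcal{G}_\gl\|_{L^1(\gf_1)} = \|\mathcal{G}_\gl\|_{L^\infty(\gf_\infty)} = (\gm-\gl)^{-1}$, and Stein's weighted Riesz--Thorin interpolation (cf.\ Remark~\ref{rem_schurtest1}) gives the matching upper bound for every $1\le p\le\infty$. For the lower bound I plan to invoke the Doob $\gf$-transform of Remark~\ref{rem_htrans}: the isometry $U\colon L^p(\gf_p)\to L^p(\Gw,\gf W\tilde{\gf}\dnu)$, $Uf:=f/\gf$, converts $(\gm-\gl)\mathcal{G}_\gl$ into a positive operator $T$ for which the joint-invariance hypothesis translates exactly into the ``bistochastic'' identities $T\mathbf{1}=T^\star\mathbf{1}=\mathbf{1}$. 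Testing against the truncations $f_n:=\chi_{\Gw_n}$, monotone convergence gives $Tf_n\nearrow\mathbf{1}$ pointwise; Dini's theorem upgrades this to uniform convergence on each compact $\overline{\Gw_m}$, and restricting the $L^p(\Gw,\gf W\tilde{\gf}\dnu)$-norm computation to the bulk $\Gw_m$, then sending $n\to\infty$ first and $m\to\infty$ second, produces the ratio $\|Tf_n\|_p/\|f_n\|_p \to 1$, which is the desired lower bound.

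Part (ii) is essentially mechanical given Part (i). $(\Leftarrow)$: positive-criticality at $\gm=\gl_0$ makes $\gf$ and $\tilde{\gf}$ the $\gl_0$-invariant ground states with $\int\gf W\tilde{\gf}\dnu<\infty$, so the embedding \eqref{imbedding} places $\gf\in L^p(\gf_p)$ for every $p$, and Part (i) supplies the eigenvalue $(\gl_0-\gl)^{-1}$. $(\Rightarrow)$: a nonnegative eigenfunction $\gf\in L^p(\gf_p)$ with eigenvalue $(\gm-\gl)^{-1}$ forces $\gm$-invariance of $\gf$ by Part (i), while $\|\gf\|_{p,\gf_p}^p=\int\gf W\tilde{\gf}\dnu<\infty$ shows $\gf\tilde{\gf}\in L^1(\Gw,W\dnu)$; Remark~\ref{remnoninv} then forces $\gm=\gl_0$ and positive-criticality of $L_{\gl_0}$, with $\gf$ and $\tilde{\gf}$ necessarily the ground states. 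The identity \eqref{gnormgeneqp9} is then immediate: $\gf\in L^p(\gf_p)$ is a nonzero eigenfunction with eigenvalue $(\gl_0-\gl)^{-1}$, supplying the lower bound matched by Theorem~\ref{thmbddgen0}. The main obstacle throughout is the interior-$p$ lower bound in \eqref{gnormgeneqp} in the null-critical subcase of Part (i), where no genuine eigenfunction lives in $L^p(\gf_p)$ and the Doob-transform argument above must be carried out carefully, relying on the fact that the ``boundary layer'' of $T\chi_{\Gw_n}$ is negligible compared to its bulk as $n\to\infty$.
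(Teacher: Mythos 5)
Most of your proposal is correct and follows the same line as the paper's (admittedly terse) proof: the biconditional in (i) is indeed just the equality case of \eqref{gseqsc}, the $L^\infty$ norm lower bound comes from testing against $\gf$, the $L^1$ companion follows by the duality in Theorem~\ref{thmbddgen0}(2), and your argument for part (ii) -- characterizing the eigenfunction via $\gm$-invariance of $\gf$ together with $\gf W\tilde\gf\in L^1$, then invoking Remark~\ref{remnoninv} to force $\gm=\gl_0$ and positive-criticality -- is exactly the paper's argument. Likewise, \eqref{gnormgeneqp9} is immediate once $\gf$ is an honest $L^p$-eigenfunction.

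The gap lies exactly where you suspect it does: the lower bound in \eqref{gnormgeneqp} in the null-critical case. Your truncation argument does not produce the ratio $1$. Write $T:=(\gm-\gl)\mathcal{G}_\gl$ after the Doob transform, so that $T\mathbf{1}=T^\star\mathbf{1}=\mathbf{1}$ with underlying measure $\dmu:=\gf W\tilde\gf\,\dnu$, and take $f_n=\chi_{\Gw_n}$. In the null-critical case $\mu(\Gw)=\infty$, so $\|f_n\|_p^p=\mu(\Gw_n)\to\infty$. Restricting the numerator to a fixed bulk $\Gw_m$ and letting $n\to\infty$ first gives
\[
\|Tf_n\|_p^p\;\geq\;\int_{\Gw_m}(Tf_n)^p\,\dmu \;\xrightarrow[n\to\infty]{}\;\mu(\Gw_m),
\]
which is a \emph{bounded} quantity, while the denominator $\mu(\Gw_n)$ diverges; so this lower bound on the ratio tends to $0$, not to $1$. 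Sending $m\to\infty$ afterwards does not help, because for each $n$ the contribution of $\Gw_m$ for $m$ close to $n$ is contaminated by the boundary layer where $Tf_n$ is strictly less than $1$, and one cannot decouple the two limits. In fact, for a general doubly Markovian operator on an infinite measure space the $L^p$ operator norm for $1<p<\infty$ can be strictly less than $1$ (the hyperbolic heat semigroup on $L^p(\mathbb{H}^d)$ with its invariant volume is the standard cautionary example), so the identities $T\mathbf{1}=T^\star\mathbf{1}=\mathbf{1}$ alone cannot carry the lower bound; one has to exploit the specific resolvent/pseudoresolvent structure of $\mathcal{G}_\gl$. Note that the paper itself is silent on this point (it refers only to Theorem~\ref{thmbddgen0} and Stein interpolation, which give the upper bound $\leq(\gm-\gl)^{-1}$ but not the matching lower bound in the null-critical regime), so this is a place where the argument needs to be supplied rather than reproduced.
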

\begin{proof}
({\em i}) All the claims of this part can be checked easily and
left to the reader. In particular, use Theorem~\ref{thmbddgen0} and Stein's
Riesz-Thorin-type interpolation theorem to prove \eqref{gnormgeneqp}.

 \vspace{.3cm}

({\em ii}) Let $1\leq p< \infty$. The positive solution $\gf$ is an eigenfunction of the
operator $\mathcal{G}_\gl\!\!\upharpoonright_{L^p(\gf_p)}$ with an eigenvalue
$(\gm-\gl)^{-1}$ if and only if $\gf W \tilde{\gf} \in L^1(\Gw)$ and $\gf$ is $\gm$-invariant positive solution.

In particular, if $\gm=\gl_0$, and  $L_{\gl_0}$ is positive-critical with respect to $W$, then $\gf$ is an eigenfunction of the
operator $\mathcal{G}_\gl\!\!\upharpoonright_{L^p(\gf_p)}$ with an eigenvalue $(\gl_0-\gl)^{-1}$.

On the other hand, if $\gf$ is an eigenfunction of the
operator $\mathcal{G}_\gl\!\!\upharpoonright_{L^p(\gf_p)}$ with an eigenvalue $(\gm-\gl)^{-1}$, then $\gf W \tilde{\gf} \in L^1(\Gw)$.

Assume that $L_\gm$ is subcritical in $\Gw$, then
$$\int_{\Omega}\Green{\Omega}{L_{\gm}}{x}{y}W(y)\gf(y)\dnu(y) <\infty. $$
By Remark~\ref{remnoninv}, $\gf$ is not a $\gm$-invariant solution, and we get a contradiction. Therefore, $L_{\gm}$ is critical in $\Gw$ and hence, $\gm= \gl_0$. Since $\gf W \tilde{\gf} \in L^1(\Gw)$, it follows that $L_{\gl_0}$ is positive-critical with respect to $W$.
\end{proof}
In the critical case we have the following result.
\begin{theorem}\label{thmbdd}
Let $L$ be an elliptic operator on $\Gw$ of the form \eqref{div_L},
and let $W$ be a positive potential. Assume that the
operator $L_{\gl_0}$ is critical, and let $\gf$ and $\tilde{\gf}$
be the ground states of $L_{\gl_0}$ and $L^\star _{\gl_0}$,
respectively. Fix $\gl<\gl_0$. Then

\vspace{.3cm}

(i)  For $1\leq p\leq\infty$, we have \be \label{gnorm}
\|\mathcal{G}_\gl\|_{L^p(\gf_p)}=
\|\mathcal{G}_\gl^\odot\|_{L^p(\tilde{\gf}_p)}=
\frac{1}{\gl_0-\gl}\;. \ee

\vspace{.3cm}

(ii) The operator $\mathcal{G}_\gl\!\!\upharpoonright_{L^\infty(\gf_\infty)}$ (resp.,
$\mathcal{G}_\gl^\odot\!\!\upharpoonright_{L^\infty(\tilde{\gf}_\infty)}$) admits a
unique eigenvalue  $\gn=(\gl_0-\gl)^{-1}$  with a nonnegative
eigenfunction. Moreover, $(\gl_0-\gl)^{-1}$ is a simple eigenvalue
of $\mathcal{G}_\gl\!\!\upharpoonright_{L^\infty(\gf_\infty)}$ (resp.,
$\mathcal{G}_\gl^\odot\!\!\upharpoonright_{L^\infty(\tilde{\gf}_\infty)}$). The corresponding eigenfunction is $\gf$ (resp.,
$\tilde{\gf}$), and $\gf$ (resp.    $\tilde{\gf}$) is the unique $L^\infty(\gf_\infty)$ (resp.    $L^\infty(\tilde{\gf}_\infty)$) solution of the equation
 $L_{\gl_0}u=0$ in $\Gw$.
\vspace{.3cm}

(iii) Suppose further that  the operator $L_{\gl_0}$ is positive-critical
with respect to $W$. Then for all $1\leq  p<\infty$ the function $\gf$
(resp.    $\tilde{\gf}$) is the unique (up to a multiplicative constant) nonnegative eigenfunction of
the operator $\mathcal{G}_\gl\!\!\upharpoonright_{L^p(\gf_p)}$ (resp.,
$\mathcal{G}_\gl^\odot\!\!\upharpoonright_{L^p(\tilde{\gf}_p)}$).
\end{theorem}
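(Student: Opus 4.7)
\emph{Proof plan.} All three parts rest on combining the norm estimate of Theorem~\ref{thmbddgen0}, the spectral identification of Theorem~\ref{thmbddgen}, the $\gl_0$-invariance relations \eqref{gseq} of the ground states, and the one-dimensionality of $\mathcal{C}_{L_{\gl_0}}(\Gw)$ in the critical case. For \textbf{(i)}, criticality of $L_{\gl_0}$ forces both $\gf$ and $\tilde{\gf}$ to be $\gl_0$-invariant positive solutions of $L_{\gl_0}u=0$ and $L^\star_{\gl_0}u=0$, respectively, so Theorem~\ref{thmbddgen1}\,(i) with $\gm=\gl_0$ applies, and the identity \eqref{gnormgeneqp} is exactly the desired norm formula \eqref{gnorm} for every $1\leq p\leq \infty$.

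For \textbf{(ii)}, let $\vgf\in L^\infty(\gf_\infty)$ be a nontrivial nonnegative eigenfunction of $\mathcal{G}_\gl\!\!\upharpoonright_{L^\infty(\gf_\infty)}$ with eigenvalue $\gn$. By Theorem~\ref{thmbddgen}, $\gn\neq 0$ and $\vgf$ solves $(L-\gh W)\vgf=0$ in $\Gw$ with $\gh:=\gl+\gn^{-1}$. The Harnack inequality upgrades $\vgf$ to a strictly positive solution, so \eqref{eq_lambda_0} gives $\gh\leq \gl_0$; since $\gn>0$ (the operator is positivity improving on a positive function), this reads $\gn\geq (\gl_0-\gl)^{-1}$. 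Part (i) supplies the reverse bound $\gn\leq \|\mathcal{G}_\gl\|_{L^\infty(\gf_\infty)}=(\gl_0-\gl)^{-1}$, so $\gn=(\gl_0-\gl)^{-1}$ and $\gh=\gl_0$; by criticality, $\mathcal{C}_{L_{\gl_0}}(\Gw)$ is one-dimensional, whence $\vgf$ is a positive multiple of $\gf$. Simplicity of the eigenvalue and uniqueness of $\gf$ among $L^\infty(\gf_\infty)$ solutions of $L_{\gl_0}u=0$ both follow from the following Liouville-type step: given any such $u$, split $u=u_1+iu_2$ and set $M:=\|u\|_{\infty,\gf_\infty}$; since $|u_j|\leq M\gf$, the functions $M\gf\pm u_j$ are nonnegative real solutions of $L_{\gl_0}v=0$, and criticality forces each of them to be either identically zero or a positive multiple of $\gf$. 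Consequently $u$ is a scalar multiple of $\gf$, which proves (d), and applying this with $u=\vgf$ (any eigenfunction with eigenvalue $(\gl_0-\gl)^{-1}$) proves simplicity.

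For \textbf{(iii)}, the same mechanism applies. Positive-criticality together with \eqref{poscrit}--\eqref{normgf} gives $\gf\in L^p(\gf_p)$ for every $1\leq p\leq\infty$, and Theorem~\ref{thmbddgen1}\,(ii) identifies $\gf$ as a nonnegative eigenfunction with eigenvalue $(\gl_0-\gl)^{-1}$. Conversely, if $\vgf\in L^p(\gf_p)$ is a nontrivial nonnegative eigenfunction with eigenvalue $\gn$, then Theorem~\ref{thmbddgen} yields $(L-\gh W)\vgf=0$ with $\gh=\gl+\gn^{-1}$, and combining $0<\gn\leq \|\mathcal{G}_\gl\|_{L^p(\gf_p)}=(\gl_0-\gl)^{-1}$ (from part (i)) with $\gh\leq \gl_0$ forces $\gh=\gl_0$, so that $\vgf\in\mathcal{C}_{L_{\gl_0}}(\Gw)$ and hence, by criticality, $\vgf$ is a positive multiple of $\gf$. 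I expect the Liouville step in (ii)---promoting an arbitrary complex bounded $L^\infty(\gf_\infty)$ solution of $L_{\gl_0}u=0$ to a scalar multiple of $\gf$---to be the only step requiring genuine care; it hinges precisely on the defining property of critical operators that every positive supersolution of $L_{\gl_0}u=0$ is already a (positive) multiple of the ground state.
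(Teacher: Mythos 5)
Your proposal is correct and follows essentially the same skeleton as the paper's proof: part (i) and the existence part of (ii) come from Theorem~\ref{thmbddgen1}(i) because ground states are $\gl_0$-invariant, and uniqueness/simplicity hinge on Theorem~\ref{thmbddgen} (identifying eigenfunctions as solutions of $L_\gk u=0$) plus the criticality properties of $L_{\gl_0}$. The one place where your route differs from the paper's is in the uniqueness/simplicity argument of part (ii): the paper shows $\gk\leq\gl_0$ from positivity, then observes that $v:=\gf-\vgf\geq 0$ is a nonnegative supersolution of $L_{\gl_0}u=0$, and invokes the fact that in the critical case $\gf$ is the unique nonnegative supersolution up to scale, concluding $v=c\gf$ with $c\in\{0,2\}$ (so $\vgf=\gf$). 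You instead close the argument by pinching $\gn$ between the lower bound $\gn\geq(\gl_0-\gl)^{-1}$ (from $\gh\leq\gl_0$) and the upper bound $\gn\leq\|\mathcal{G}_\gl\|_{L^\infty(\gf_\infty)}=(\gl_0-\gl)^{-1}$ from part (i); this is precisely the mechanism the paper itself uses in part (iii), so it is a legitimate, if slightly rearranged, version of the same idea. Your Liouville step for arbitrary complex $u\in L^\infty(\gf_\infty)$ via $M\gf\pm u_j$ is a more explicit version of the paper's terser $u:=\gf-\vgf$ comparison; both rest on the same criticality fact that a nonnegative (super)solution is a nonnegative multiple of $\gf$. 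No gaps.
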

\begin{proof} ({\em i--ii}) Since $\gf$ is a ground state, it is a $\gl_0$-invariant positive solution with respect
to the operator $L$ and the weight $W$, part ({\em i}) and the existence assertion of ({\em ii}) follow
from part ({\em i}) of Theorem~\ref{thmbddgen1}.

It remains to prove the uniqueness and simplicity of
$(\gl_0-\gl)^{-1}$ for the operator $\mathcal{G}_\gl\!\!\upharpoonright_{L^\infty(\gf_\infty)}$. Let $\vgf$ be a nonnegative eigenfunction of
the operator $\mathcal{G}_\gl\!\!\upharpoonright_{L^\infty(\gf_\infty)}$ with an
eigenvalue $\gn=(\gk-\gl)^{-1}$. Without loss of generality we may
assume that $\|\vgf\|_{\infty,\gf_\infty}=1$, thus $\gf-\vgf\geq 0$. By
Theorem~\ref{thmbddgen}, $\vgf$ is a positive solution of the
equation $(L-\gk W)u=0$ in $\Gw$. Hence, $\gk\leq \gl_0$.
Therefore, $v:=\gf-\vgf$ is a nonnegative supersolution of the
equation $(L-\gl_0 W)u=$ in $\Gw$. On the other hand, $L_{\gl_0}$
is critical in $\Gw$ if and only if  $\gf$ is the unique (up to a
multiplicative constant) nonzero nonnegative supersolution. Thus, $\gk=
\gl_0$ and $\vgf=\gf$. Hence, for all $\gl<\gl_0$,  $\gf$  is the unique nonnegative
eigenfunction of $\mathcal{G}_\gl\!\!\upharpoonright_{L^\infty(\gf_\infty)}$.

Moreover, if $(\gl_0-\gl)^{-1}$ is an eigenvalue of
$\mathcal{G}_\gl\!\!\upharpoonright_{L^\infty(\gf_\infty)}$ with a normalized
eigenfunction $\vgf$, then $u:=\gf-\vgf\geq 0$, and by Theorem~\ref{thmbddgen} $u$ is a nonnegative solution
of the equation $L_{\gl_0}u=0$ in $\Gw$. Since $L_{\gl_0}$ is
critical, it follows that $u = c\gf$ for some $c\geq 0$. So,
$\vgf= (1-c)\gf$, and either $c=0$ or $c=2$. Hence, $(\gl_0-\gl)^{-1}$ is a simple
eigenvalue of $\mathcal{G}_\gl\!\!\upharpoonright_{L^\infty(\gf_\infty)}$.

\vspace{.3cm}

({\em iii}) For $1\leq p< \infty$, part ({\em ii}) of Theorem~\ref{thmbddgen1} implies that $\gf$
is a positive eigenfunction of $\mathcal{G}_\gl\!\!\upharpoonright_{L^p(\gf_p)}$ with an eigenvalue $(\gl_0-\gl)^{-1}$.

Let $\vgf\in L^p(\gf_p)$ be a nonnegative normalized eigenfunction
of the operator $\mathcal{G}_\gl\!\!\upharpoonright_{L^p(\gf_p)}$ with an eigenvalue
$\gn$. Note that $\vgf$ is strictly positive since $\mathcal{G}_\gl\!\!\upharpoonright_{L^p(\gf_p)}$ is positivity improving. Clearly, $\gn\geq 0$, and by Theorem~\ref{thmbddgen} $\gn\neq 0$. Therefore, $\gn$ can be written as  $\gn =(\gk-\gl)^{-1}$, where $\gl<\gk$.

On the other hand,  $\gn\leq \|\mathcal{G}_\gl\|_{L^p(\gf_p)}= (\gl_0-\gl)^{-1}$, and hence
$\gl_0\leq \gk$. By Theorem~\ref{thmbddgen}, $\vgf$ is a nonnegative solution of the equation $L_\gk u=0$ in $\Gw$, therefore $\gk\leq \gl_0$, Thus, $\gk= \gl_0$, and $\vgf=\gf$.
\end{proof}
The next result deals with the case $p=2$. In this case, any $L^2$-eigen-function of $\mathcal{G}_\gl\!\!\upharpoonright_{L^2(\gf_2)}$ with the `maximal' eigenvalue has a definite sign and this eigenvalue is simple.
\begin{theorem}\label{thmbddpleq2}
Let $L$ be an elliptic operator on $\Gw$ of the form \eqref{div_L},
and let $W$ be a positive potential. Fix $\gl<\gm\leq\gl_0$, and let $\gf$ and $\tilde{\gf}$
be two positive solutions of the equation $L_\gm u=0$ and $L_\gm^\star u=0$ in $\Gw$,
respectively.

If $\vgf$  is an eigenfunction of $\mathcal{G}_\gl\!\!\upharpoonright_{L^2(\gf_2)}$ with an eigenvalue $(\gm-\gl)^{-1}$, then $\vgf$ has a definite sign, and $(\gm-\gl)^{-1}$ is a simple eigenvalue of $\mathcal{G}_\gl\!\!\upharpoonright_{L^2(\gf_2)}$.

\vspace{.3cm}

Assume further that $\gf W \tilde{\gf}\in L^1(\Gw)$. Then $\vgf=c\gf$ for some constant $c$,  $\gm=\gl_0$, and the operator $L_{\gl_0}$ is positive-critical.
Moreover,  $(\gl_0-\gl)^{-1}$ is the unique eigenvalue of $\mathcal{G}_\gl\!\!\upharpoonright_{L^p(\gf_p)}$ with a nonnegative eigenfunction for all $1 \leq p \leq \infty$  and all $\gl<\gl_0$.
Furthermore,  $(\gl_0-\gl)^{-1}$ is  a simple eigenvalue of $\mathcal{G}_\gl\!\!\upharpoonright_{L^p(\gf_p)}$ for all $2\leq p\leq \infty$, and all $\gl<\gl_0$.
\end{theorem}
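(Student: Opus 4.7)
I would proceed in two stages. First, I would prove the sign‐definiteness and simplicity statements in $L^2(\gf_2)$ purely from the positivity‐improving property of $\mathcal{G}_\gl$ together with the sharp norm bound $\|\mathcal{G}_\gl\|_{L^2(\gf_2)}\le(\gm-\gl)^{-1}$ of Theorem~\ref{thmbddgen0}. Second, under the additional hypothesis $\gf W\tilde\gf\in L^1(\Gw)$, I would apply Remark~\ref{remnoninv} to force $\gm=\gl_0$ and $L_{\gl_0}$ to be positive‐critical, identify $\vgf=c\gf$, and finally bootstrap through the embeddings \eqref{imbedding} to get the $p$-independent uniqueness and simplicity claims for $1\le p\le\infty$ and $2\le p\le\infty$, respectively.

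\textbf{Stage 1: sign and simplicity at $p=2$.} From $\mathcal{G}_\gl\vgf=(\gm-\gl)^{-1}\vgf$ and the pointwise positivity of the kernel, $h:=(\gm-\gl)^{-1}|\vgf|=|\mathcal{G}_\gl\vgf|\le\mathcal{G}_\gl|\vgf|$, so $g:=\mathcal{G}_\gl|\vgf|-h\ge 0$. Because $g,h\ge 0$, expanding the square gives
\[
\|g+h\|_{2,\gf_2}^{2}\ge\|g\|_{2,\gf_2}^{2}+\|h\|_{2,\gf_2}^{2},
\]
while Theorem~\ref{thmbddgen0} yields $\|g+h\|_{2,\gf_2}=\|\mathcal{G}_\gl|\vgf|\|_{2,\gf_2}\le\|h\|_{2,\gf_2}$. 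Hence $g\equiv 0$, so $|\vgf|$ is itself an eigenfunction with eigenvalue $(\gm-\gl)^{-1}$. Writing $\vgf=\vgf_+-\vgf_-$ and combining the eigen‐equations for $\vgf$ and $|\vgf|$ then forces $\mathcal{G}_\gl\vgf_\pm=(\gm-\gl)^{-1}\vgf_\pm$. Since $\mathcal{G}_\gl$ is positive improving, each nonzero $\vgf_\pm$ is strictly positive a.e., but their supports are disjoint, so one of them vanishes identically and $\vgf$ has a definite sign. For simplicity, if $\vgf_1,\vgf_2$ are two positive eigenfunctions with eigenvalue $(\gm-\gl)^{-1}$, elliptic regularity gives continuous representatives, and hence a continuous ratio $\vgf_1/\vgf_2$; if this ratio were nonconstant it would attain distinct values $a<b$ and then for any $t\in(a,b)$ the eigenfunction $\vgf_1-t\vgf_2$ would take both signs, contradicting the definite‐sign property just established.

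\textbf{Stage 2: identification with $\gf$ and $p$-independence.} Assume $\gf W\tilde\gf\in L^1(\Gw)$, normalized as in \eqref{poscrit}. A direct computation then gives $\gf\in L^2(\gf_2)$ and $\tilde\gf\in L^2(\tilde\gf_2)$, each with unit norm. Cauchy--Schwarz in the pairing \eqref{duality} applied to $\vgf\in L^2(\gf_2)$ and $\tilde\gf\in L^2(\tilde\gf_2)$ yields
\[
\int_{\Gw}\vgf(x)\tilde\gf(x)W(x)\dnu(x)\le\|\vgf\|_{2,\gf_2}\,\|\tilde\gf\|_{2,\tilde\gf_2}<\infty.
\]
The eigenvalue relation $\mathcal{G}_\gl\vgf=(\gm-\gl)^{-1}\vgf$ is precisely the statement that $\vgf\in\mathcal{C}_{L_\gm}(\Gw)$ is a $\gm$-invariant positive solution; Remark~\ref{remnoninv} then forces $\gm=\gl_0$ and $L_{\gl_0}$ to be positive-critical with respect to $W$. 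Criticality makes $\mathcal{C}_{L_{\gl_0}}(\Gw)$ one-dimensional, so $\vgf=c\gf$. For the $p$-independent assertions, let $\psi\in L^p(\gf_p)$ be a nonnegative eigenfunction with eigenvalue $\gn$; by Theorem~\ref{thmbddgen}, $\psi=\gn\,\mathcal{G}_\gl\psi>0$ and solves $(L-[\gl+\gn^{-1}]W)\psi=0$, hence $\gl+\gn^{-1}\le\gl_0$, while Theorem~\ref{thmbddgen0} gives $\gn\le(\gl_0-\gl)^{-1}$; together, $\gn=(\gl_0-\gl)^{-1}$. Simplicity for $p=\infty$ is Theorem~\ref{thmbdd}(ii), and for $2\le p<\infty$ the continuous embedding $L^p(\gf_p)\subset L^2(\gf_2)$ from \eqref{imbedding} (now applicable because \eqref{poscrit} holds) reduces the problem to Stage~1.

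\textbf{Main obstacle.} The delicate step is the upgrade in Stage~1 from the pointwise inequality $\mathcal{G}_\gl|\vgf|\ge(\gm-\gl)^{-1}|\vgf|$ to equality: this is exactly where the Hilbertian identity $\|g+h\|^2=\|g\|^2+2\langle g,h\rangle+\|h\|^2$ is used, and it does not seem to extend to other $p$. This is why the definite‐sign conclusion is intrinsically a $p=2$ statement, and why simplicity for $2\le p<\infty$ must be obtained only a posteriori, by passing through the $L^2$ embedding once $L_{\gl_0}$ has been shown positive‐critical.
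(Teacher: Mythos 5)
Your proof is correct and takes essentially the same route as the paper: saturate the $L^2$ operator-norm bound to show that $|\vgf|$ is itself an eigenfunction, use the resulting $\gm$-invariance of $|\vgf|$ together with $|\vgf|W\tilde\gf\in L^1(\Gw)$ and Remark~\ref{remnoninv} to force $\gm=\gl_0$ and positive-criticality (the paper routes this through Theorem~\ref{thmbddgen1}(ii), which itself rests on that remark), and then transfer simplicity to $2\le p\le\infty$ via the embedding \eqref{imbedding}. The only differences are presentational: you extract $\mathcal{G}_\gl|\vgf|=(\gm-\gl)^{-1}|\vgf|$ by isolating the nonnegative discrepancy $g$ and showing $\|g\|_{2,\gf_2}=0$ (a small orthogonality trick), where the paper instead runs a Cauchy--Schwarz chain and invokes its equality case; and for simplicity at $p=2$ you use a continuity/ratio argument, where the paper cites the standard orthogonality argument of \cite[Theorem~XIII.43]{RS} -- both are valid and essentially equivalent, provided one notes (as is implicit in your argument) that the eigenspace is linear, so every nonzero element has a definite sign, and that one may reduce to real eigenfunctions since the kernel and eigenvalue are real.
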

\begin{proof}
Recall that
$$
\|\mathcal{G}_\gl\|_{L^2(\gf_2)}\leq
\frac{1}{\gm-\gl}. $$
Let $\vgf\in L^2(\gf_2)$ be an eigenfunction
of the operator $\mathcal{G}_\gl\!\!\upharpoonright_{L^2(\gf_2)}$ with an eigenvalue
$(\gm-\gl)^{-1}$.  Thus, $
\|\mathcal{G}_\gl\|_{L^2(\gf_2)}=
(\gm-\gl)^{-1}$.
Without loss of generality, we may assume that $\vgf$ is a real function.  Therefore, due to the positivity improving of $\mathcal{G}_\gl\!\!\upharpoonright_{L^2(\gf_2)}$, and the Cauchy-Schwarz inequality, we obtain
\begin{multline}\label{eq:p=2}
 \frac{1}{\gm-\gl}
 \|\vgf\|_{L^2(\gf_2)}^2=(\vgf,\mathcal{G}_\gl\!\!\upharpoonright_{L^2(\gf_2)}\vgf)_{L^2(\gf_2)} \leq\\[3mm]
 \left(|\vgf|,\Big|\mathcal{G}_\gl\!\!\upharpoonright_{L^2(\gf_2)}\vgf\Big|\right)_{L^2(\gf_2)} \leq
  \left(|\vgf|,\mathcal{G}_\gl\!\!\upharpoonright_{L^2(\gf_2)}\big|\vgf\big|\right)_{L^2(\gf_2)} \leq\\[3mm]
  \|\vgf\|_{L^2(\gf_2)}\|\mathcal{G}_\gl\!\!\upharpoonright_{L^2(\gf_2)}|\vgf|\|_{L^2(\gf_2)}\leq \frac{1}{\gm-\gl}\|\vgf\|_{L^2(\gf_2)}^2.
 \end{multline}
 As a result we have equality signs in all the inequalities of \eqref{eq:p=2}. The equality in the Cauchy-Schwarz inequality implies that
 $$\mathcal{G}_\gl\!\!\upharpoonright_{L^2(\gf_2)}|\vgf|= \frac{1}{\gm-\gl}|\vgf|,$$
 and therefore, $|\vgf|$ is a nonnegative eigenfunction of the operator $\mathcal{G}_\gl\!\!\upharpoonright_{L^2(\gf_2)}$. Since $\mathcal{G}_\gl\!\!\upharpoonright_{L^2(\gf_2)}$ is positivity improving, we have $|\vgf|>0$. It follows that any such eigenfunction has a definite sign. Consequently, a standard orthogonality argument shows that $(\gm-\gl)^{-1}$ is simple  (cf. \cite[Theorem~XIII.43]{RS}).

 Assume further that  $\gf W\tilde{\gf}\in L^1(\Gw)$, and denote $\hat{\gf}_p:= |\vgf|^{-1}(|\vgf|W\tilde{\gf})^{1/p}$. Then
  by the Cauchy-Schwarz inequality $\vgf W\tilde{\gf}\in L^1(\Gw)$, and hence $|\vgf|$ is a positive eigenfunction
of the operator $\mathcal{G}_\gl\!\!\upharpoonright_{L^2(\hat{\gf}_2)}$. Therefore, by Theorem~\ref{thmbddgen1} {\em (ii)}, $\gm=\gl_0$, the operator $L_{\gl_0}$ is positive-critical, and for some constant $c$ we have $\vgf=c\gf$. In addition,  part {\em (iii)} of Theorem~\ref{thmbdd} implies that $(\gl_0-\gl)^{-1}$ is the unique eigenvalue of $\mathcal{G}_\gl\!\!\upharpoonright_{L^p(\gf_p)}$ with a nonnegative eigenfunction $\gf$ for all $1\leq p < \infty$ and all $\gl<\gl_0$.

The simplicity of $(\gl_0-\gl)^{-1}$ as  an eigenvalue of $\mathcal{G}_\gl\!\!\upharpoonright_{L^p(\gf_p)}$ for $2\leq p \leq \infty$ follows now from the simplicity for $p=2$ and the embedding \eqref{imbedding}.
\end{proof}
Next we study the case $p=1$, and obtain the simplicity of the eigenvalue $(\gl_0-\gl)^{-1}$ for all $p$ under the assumption that $(\gm-\gl)^{-1}$ is an eigenvalue of $\mathcal{G}_\gl\!\!\upharpoonright_{L^1(W\tilde{\gf})}$.  Note that for $p=1$ the result does not depend on a particular positive solution $\gf$ of the equation $L_\gm u=0$ in $\Gw$ since $L^1(\gf_1)=L^1(W\tilde{\gf})$. As a result, we obtain $L^1$- and $L^p$-Liouville theorems for solutions of the equation $L_{\gm}u=0$ which are
eigenfunctions of the operator $\mathcal{G}_\gl$ with an eigenvalue $(\gm-\gl)^{-1}$.
 \begin{theorem}\label{thmbddgenl1}
Let $L$ be an elliptic operator on $\Gw$ of the form \eqref{div_L},
and let $W$ be a positive potential. Fix $\gm\leq\gl_0$. Let $\tilde{\gf}$
be a positive solution of the equation  $L_\gm^\star u=0$ in $\Gw$,
 For $\gl<\gm$ consider the operator
$\mathcal{G}_\gl\!\!\upharpoonright_{L^1(W\tilde{\gf})}$. Suppose further that  $(\gm-\gl)^{-1}$ is an eigenvalue of $\mathcal{G}_\gl\!\!\upharpoonright_{L^1(W\tilde{\gf})}$ with an eigenfunction $\vgf$.

\vspace{2mm}

(i) The eigenfunction $\vgf$ has a definite sign, $\gm=\gl_0$, and $L_{\gl_0}$ is positive-critical with respect to $W$ with a ground state $\vgf$. In particular, $(\gl_0-\gl)^{-1}$ is a simple eigenvalue of
the operator $\mathcal{G}_\gl\!\!\upharpoonright_{L^1(W\tilde{\gf})}$.

\vspace{2mm}

(ii) Set
 $$\vgf_p:= |\vgf|^{-1} (|\vgf|W\tilde{\gf})^{1/p} \qquad 1\leq p\leq \infty.$$
Then for all  $\gl<\gl_0$ and all $1\leq  p\leq \infty$, $\vgf$ is an eigenfunction of $\mathcal{G}_\gl\!\!\upharpoonright_{L^p(\vgf_p)}$ with an eigenvalue $(\gl_0-\gl)^{-1}$. Moreover, $(\gl_0-\gl)^{-1}$ is a simple eigenvalue, and it is the unique eigenvalue with a nonnegative eigenfunction.
\end{theorem}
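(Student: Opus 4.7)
The proof splits naturally into two parts; the first extracts the critical structural information from the hypothesis that $(\gm-\gl)^{-1}$ is attained as an eigenvalue, while the second simply assembles previously established results with the aid of a duality/embedding argument.

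\textbf{Part (i).} First I would apply Theorem~\ref{thmbddgen} to deduce that $\vgf$ solves $L_\gm\vgf=0$ weakly in $\Gw$; by elliptic regularity $\vgf$ has a continuous representative. The core move is to revisit the chain of inequalities in the proof of Theorem~\ref{thmbddgen0}(1) for $p=1$ and observe that the hypothesis on $\vgf$ forces equalities throughout
\begin{equation*}
\frac{\|\vgf\|_{1,W\tilde{\gf}}}{\gm-\gl}=\|\mathcal{G}_\gl\vgf\|_{1,W\tilde{\gf}}\leq \|\mathcal{G}_\gl|\vgf|\,\|_{1,W\tilde{\gf}}\leq \frac{\|\vgf\|_{1,W\tilde{\gf}}}{\gm-\gl}.
\end{equation*}
The first equality combined with the pointwise inequality $|\mathcal{G}_\gl\vgf|\leq\mathcal{G}_\gl|\vgf|$ yields $|\mathcal{G}_\gl\vgf|=\mathcal{G}_\gl|\vgf|$ a.e. Decomposing $\vgf=\vgf^+-\vgf^-$, this gives $\mathcal{G}_\gl\vgf^+\cdot\mathcal{G}_\gl\vgf^-=0$ a.e., and the strict positivity of the kernel $\Green{\Omega}{L_\gl}{x}{y}W(y)$ forces one of $\vgf^\pm$ to vanish identically. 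Hence $\vgf$ has a definite sign; normalize $\vgf\geq 0$, and invoke the Harnack inequality for $L_\gm\vgf=0$ to conclude $\vgf>0$ in $\Gw$.

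Next, the second equality, after Fubini and comparison with \eqref{gseqsc}, rearranges to
\begin{equation*}
\int_\Gw W(y)\vgf(y)\left(\frac{\tilde{\gf}(y)}{\gm-\gl}-\int_\Gw \Green{\Omega}{L_\gl}{x}{y}W(x)\tilde{\gf}(x)\dnu(x)\right)\dnu(y)=0,
\end{equation*}
whose integrand is nonnegative; since $\vgf>0$, it vanishes a.e., and by continuity everywhere, so $\tilde{\gf}$ is a $\gm$-invariant positive solution of $L^\star_\gm u=0$. The hypothesis $\vgf\in L^1(W\tilde{\gf})$ is exactly $\vgf\tilde{\gf}W\in L^1(\Gw)$, so Remark~\ref{remnoninv} delivers $\gm=\gl_0$ and positive-criticality of $L_{\gl_0}$ with respect to $W$. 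Since $L_{\gl_0}$ is critical, its positive solutions form a one-dimensional cone, so $\vgf$ is (up to a positive scalar) the ground state. The same argument applied to any eigenfunction of $\mathcal{G}_\gl\!\!\upharpoonright_{L^1(W\tilde{\gf})}$ with eigenvalue $(\gl_0-\gl)^{-1}$ shows it too has a definite sign and is a nonzero solution of $L_{\gl_0}u=0$, hence a scalar multiple of $\vgf$, which gives the geometric simplicity.

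\textbf{Part (ii).} Since $\vgf$ is the ground state of $L_{\gl_0}$, it is $\gl_0$-invariant, and $L_{\gl_0}$ is positive-critical with respect to $W$. Applying Theorem~\ref{thmbddgen1}(i) at $p=\infty$ and Theorem~\ref{thmbddgen1}(ii) at $1\leq p<\infty$, with $(\gm,\gf)$ replaced by $(\gl_0,\vgf)$, gives that $\vgf$ is an eigenfunction of $\mathcal{G}_\gl\!\!\upharpoonright_{L^p(\vgf_p)}$ with eigenvalue $(\gl_0-\gl)^{-1}$ for every $1\leq p\leq\infty$. Uniqueness of the eigenvalue admitting a nonnegative eigenfunction, and uniqueness (up to scaling) of such a nonnegative eigenfunction, follow from Theorem~\ref{thmbdd}(ii) at $p=\infty$ and Theorem~\ref{thmbdd}(iii) for $1\leq p<\infty$. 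For simplicity at arbitrary $p$, I would normalize $\int_\Gw \vgf W\tilde{\gf}\dnu=1$ and invoke the continuous embedding $L^p(\vgf_p)\subset L^1(\vgf_1)=L^1(W\tilde{\gf})$ from \eqref{imbedding}: any eigenfunction of $\mathcal{G}_\gl\!\!\upharpoonright_{L^p(\vgf_p)}$ with eigenvalue $(\gl_0-\gl)^{-1}$ also belongs to $L^1(W\tilde{\gf})$, where simplicity was proved in part~(i), hence it is a scalar multiple of $\vgf$.

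\textbf{Main obstacle.} The delicate step is the opening analysis in part~(i): extracting both the definite sign of $\vgf$ and the $\gm$-invariance of $\tilde{\gf}$ from the equality case of an integrated $L^1$-inequality requires combining positivity improving with elliptic regularity and continuity, in order to promote almost-everywhere statements to pointwise statements throughout $\Gw$. Once $\tilde{\gf}$ is identified as $\gm$-invariant, Remark~\ref{remnoninv} supplies $\gm=\gl_0$ and positive-criticality, after which the results of Theorems~\ref{thmbddgen1} and~\ref{thmbdd} together with the embedding \eqref{imbedding} finish both parts essentially mechanically.
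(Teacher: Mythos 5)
Your proposal is correct and follows essentially the same line of argument as the paper: both proofs hinge on tracking the equality cases in the $L^1$ chain from Theorem~\ref{thmbddgen0}, extracting first the pointwise identity $|\mathcal{G}_\gl\vgf|=\mathcal{G}_\gl|\vgf|$ (hence the definite sign of $\vgf$, with you supplying a slightly more explicit $\vgf^\pm$-decomposition than the paper's terse statement), then the $\gm$-invariance of $\tilde{\gf}$ from the second equality, then invoking Remark~\ref{remnoninv} to pin down $\gm=\gl_0$ and positive-criticality, and finally obtaining part~(ii) via the embedding \eqref{imbedding} together with Theorems~\ref{thmbddgen1} and~\ref{thmbdd}. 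The only caveat worth noting is Remark~\ref{rem9}: the passage from the $L^1$ eigenvalue equation to the conclusion that $\vgf$ weakly solves $L_\gm u=0$ (via Theorem~\ref{thmbddgen}) uses the standing assumption that $L^1$-eigenfunctions lie in $L^q_{\loc}$ for some $q>1$, which you implicitly use but do not mention.
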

\begin{proof}
(i) Fix any positive solution $\gf$ of the equation $L_\gm u=0$ in $\Gw$, and let  $\gf_p:= \gf^{-1} (\gf W\tilde{\gf})^{1/p}.$
Clearly, $L^1(\gf_1)= L^1(W\tilde{\gf})$. By Theorem~\ref{thmbddgen0}, for any  $\gl<\gm$ the operator $\mathcal{G}_\gl\!\!\upharpoonright_{L^1(W\tilde{\gf})}$ is bounded with a norm
$\|\mathcal{G}_\gl\!\!\upharpoonright_{L^1(W\tilde{\gf})}\|_{L^1(W\tilde{\gf})}\leq (\gm-\gl)^{-1}$.

Let $\vgf$ be an eigenfunction of $\mathcal{G}_\gl\!\!\upharpoonright_{L^1(W\tilde{\gf})}$ with an eigenvalue $(\gm-\gl)^{-1}$. By our assumption,
\be\label{gseql1}
\frac{|\vgf(x)|}{\gm-\gl}=   \left|\int_{\Omega}\Green{\Omega}{L_\gl}{x}{y}W(y)\vgf(y)\dnu(y)\right|
   \quad\forall x\in \Gw.
 \ee
Therefore, using \eqref{gseqsc} we obtain
\begin{multline}\label{eq_L1}
\frac{1}{\gm-\gl}\int_\Gw |\vgf(x)|\, W(x)\tilde{\gf}(x)\dnu=\\[2mm]
\int_\Gw \left|\int_{\Omega}\Green{\Omega}{L_\gl}{x}{y}W(y)\vgf(y)\dnu(y)\right|W(x)\tilde{\gf}(x)\dnu(x)\leq \\[2mm]
\int_\Gw \left(\int_{\Omega}\Green{\Omega}{L_\gl}{x}{y}W(y)|\vgf(y)|\dnu(y)\right) W(x)\tilde{\gf}(x)\dnu(x) =\\[2mm]
\int_\Gw \left(\int_{\Omega}\Green{\Omega}{L_\gl}{x}{y}W(x)\tilde{\gf}(x)\dnu(x) \right) W(y)|\vgf(y)|\dnu(y) \leq\\[2mm]
\frac{1}{\gm-\gl} \int_{\Omega} \tilde{\gf}(y) W(y)|\vgf(y)|\dnu(y).
\end{multline}
Therefore, the two inequalities in \eqref{eq_L1} are equalities, and in particular,  for almost all $x\in\Gw$ we have
\be\label{samesign}
\left|\int_{\Omega}\Green{\Omega}{L_\gl}{x}{y}W(y)\vgf(y)\dnu(y)\right| =
\int_{\Omega}\Green{\Omega}{L_\gl}{x}{y}W(y)|\vgf(y)|\dnu(y),
\ee
and hence $\vgf$ does not change its sign in $\Gw$. Moreover, the equality in \eqref{eq_L1} implies also that  $\tilde{\gf}$ is an invariant solution of the equation $L_\gm^\star u=0$ in $\Gw$. Since $|\vgf| W \tilde{\gf}\in L^1(\Gw)$,  Remark~\ref{remnoninv} implies that $\gm=\gl_0$,  $L_{\gl_0}$ is positive-critical, and $|\vgf|$ is its ground state. Hence, the simplicity of the `maximal' eigenvalue follows. Consequently, part (ii) follows using the embedding \eqref{imbedding}, part (i) of the present theorem, and part (iii) of Theorem~\ref{thmbdd}.
\end{proof}
\begin{corollary}\label{CorL1}
Let $L$ be an elliptic operator on $\Gw$ of the form \eqref{div_L},
and let $W$ be a positive potential. Assume further that $L^\star \mathbf{1}=0$.
 For $\gl<0$ consider the operator
$\mathcal{G}_\gl\!\!\upharpoonright_{L^1(W)}$.

Suppose that  $|\gl|^{-1}$ is an eigenvalue of $\mathcal{G}_\gl\!\!\upharpoonright_{L^1(W)}$ with an eigenfunction $\vgf$, and set
 $\vgf_p:= |\vgf|^{-1} (|\vgf|W)^{1/p}$, where $1\leq p\leq \infty$.
  Then $\vgf$ has a definite sign, $\gl_0=0$, $L$ is positive-critical, and for all  $\gl<0$ and all $1\leq p\leq \infty$,  $|\gl|^{-1}$ is the unique eigenvalue of $\mathcal{G}_\gl\!\!\upharpoonright_{L^p(\vgf_p)}$ with a nonnegative eigenfunction. Moreover, $|\gl|^{-1}$ is a simple eigenvalue of $\mathcal{G}_\gl\!\!\upharpoonright_{L^p(\vgf_p)}$
\end{corollary}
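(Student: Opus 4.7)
The plan is to recognize that Corollary~\ref{CorL1} is essentially a direct specialization of Theorem~\ref{thmbddgenl1} to the data $\gm=0$ and $\tilde{\gf}=\mathbf{1}$, so no new analytic work is needed; one only has to verify that the hypotheses of that theorem are met and translate the conclusions.

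First I would observe that the assumption $L^\star \mathbf{1}=0$ means that $\mathbf{1}\in \mathcal{C}_{L^\star}(\Gw)$, so in particular $\mathcal{C}_{L^\star}(\Gw)\neq\emptyset$, which (by the definition \eqref{eq_lambda_0} of the generalized principal eigenvalue applied to $L^\star$, or equivalently to $L$) forces $\gl_0(L,W,\Gw)\geq 0$. Hence $\gm:=0$ satisfies $\gm\leq\gl_0$, and $\tilde{\gf}:=\mathbf{1}$ is a legitimate positive solution of $L_\gm^\star u=0$ in $\Gw$ in the sense required by Theorem~\ref{thmbddgenl1}. With this choice, $W\tilde{\gf}=W$, so the space $L^1(W\tilde{\gf})$ from Theorem~\ref{thmbddgenl1} coincides exactly with the space $L^1(W)$ appearing in the corollary, and for $\gl<0$ the scalar $(\gm-\gl)^{-1}$ equals $|\gl|^{-1}$.

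Next I would invoke Theorem~\ref{thmbddgenl1}(i) with these data. Since by hypothesis $|\gl|^{-1}=(\gm-\gl)^{-1}$ is an eigenvalue of $\mathcal{G}_\gl\!\!\upharpoonright_{L^1(W)}$ with eigenfunction $\vgf$, part~(i) of that theorem yields that $\vgf$ has a definite sign, that $\gm=\gl_0$ (hence $\gl_0=0$), and that $L_{\gl_0}=L$ is positive-critical with respect to $W$, with $|\vgf|$ a ground state. Applying part~(ii) of the same theorem with the weights $\vgf_p=|\vgf|^{-1}(|\vgf|W)^{1/p}$ (noting that $\tilde{\gf}=\mathbf{1}$ makes this exactly the $\vgf_p$ of the corollary) then gives, for every $\gl<0$ and every $1\leq p\leq \infty$, that $\vgf$ is an eigenfunction of $\mathcal{G}_\gl\!\!\upharpoonright_{L^p(\vgf_p)}$ with eigenvalue $(\gl_0-\gl)^{-1}=|\gl|^{-1}$, that $|\gl|^{-1}$ is simple, and that it is the unique such eigenvalue admitting a nonnegative eigenfunction.

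I do not anticipate any genuine obstacle: the only points that require minor care are the bookkeeping that $L^\star\mathbf{1}=0$ supplies the needed positive solution $\tilde{\gf}$ of $L^\star_\gm u=0$ at $\gm=0$, and the translation of the sign convention $|\gl|^{-1}=(\gm-\gl)^{-1}$. Once these identifications are made, every assertion of Corollary~\ref{CorL1} is read off verbatim from Theorem~\ref{thmbddgenl1}.
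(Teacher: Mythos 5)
Your proposal is correct and is precisely the intended derivation: the paper gives no explicit proof of Corollary~\ref{CorL1}, since it is stated immediately after Theorem~\ref{thmbddgenl1} as its specialization to $\gm=0$, $\tilde{\gf}=\mathbf{1}$. Your verification of the hypotheses (using $L^\star\mathbf{1}=0$ to get $\tilde\gf=\mathbf{1}\in\mathcal{C}_{L^\star_0}(\Gw)$, the criticality-theory fact that $\mathcal{C}_{L_\gl}(\Gw)\neq\emptyset$ iff $\mathcal{C}_{L^\star_\gl}(\Gw)\neq\emptyset$ to conclude $\gl_0\geq 0$, and the identifications $L^1(W\tilde\gf)=L^1(W)$ and $(\gm-\gl)^{-1}=|\gl|^{-1}$) is exactly what is needed, and the conclusions then read off from parts (i) and (ii) of the theorem.
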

\begin{example}\label{exL1}
Let $\Gw=\R^d$, and consider a uniformly elliptic operator $L$ with bounded smooth coefficients on $\R^d$ such that $L^\star \mathbf{1}=0$ in $\R^d$ (these conditions can be relaxed). For example, assume that $L$ is of the form  
$$Lu:=  -\mathrm{div} \left(A(x)\nabla u +  u\tilde{b}(x) \right)\qquad x\in\R^d.$$
Suppose that the equation $Lu=0$ in $\R^d$ admits a solution $\vgf$ satisfying $\vgf\in L^1(\R^d)$. Let $k_L^{\R^d}(x,y,t)$ the heat kernel associated with the operator $L$ on $\R^d$. Then
$$v(x,t):=\int_{\R^d} k_L^{\R^d}(x,y,t)\vgf(y)\dy$$
is a well defined $L^1$-solution of the Cauchy problem with the initial condition $\vgf$. Since the uniqueness of the Cauchy problem for $L^1$-initial conditions holds true \cite{AB,ABC}, it follows that  $v=\vgf$.
Fix $\gl<0$.  It follows that
\begin{multline*}\frac{\vgf(x)}{|\gl|}=\int_0^\infty e^{\gl t}v(x,t)\dt=\int_0^\infty e^{\gl t}\left(\int_{\R^d}  k_L^{\R^d}(x,y,t)\vgf(y)\dy\right)\dt=\\
\int_{\R^d}\left(\int_0^\infty  e^{\gl t}k_L^{\R^d}(x,y,t)\dt\right)\vgf(y)\dy=\int_{\R^d}\Green{\R^d}{L_\gl}{x}{y}\vgf(y)\dy.
\end{multline*}
So, $\vgf$ is an eigenfunction of  $\mathcal{G}_\gl\!\!\upharpoonright_{L^1(\R^d)}$ with an eigenvalue $|\gl|^{-1}$. Corollary~\ref{CorL1} implies that  $L$ is positive-critical with respect to $W=\mathbf{1}$, and $|\vgf|>0$ is the corresponding ground state. In particular, $\mathbf{1}$ is an invariant positive solution of the operator $L^\star$. Moreover, $|\gl|^{-1}$ is a simple eigenvalue of $\mathcal{G}_\gl\!\!\upharpoonright_{L^1(\R^d)}$.

Let $1\leq p\leq \infty$,  and $\vgf_p:= |\vgf|^{\frac{1}{p}-1}$.  Then, by Corollary~\ref{CorL1} for all  $\gl<0$ and all $1\leq p\leq \infty$,  $|\gl|^{-1}$ is the unique eigenvalue of $\mathcal{G}_\gl\!\!\upharpoonright_{L^p(\vgf_p)}$ with a nonnegative eigenfunction $|\vgf|$. Moreover, $|\gl|^{-1}$ is a simple eigenvalue of $\mathcal{G}_\gl\!\!\upharpoonright_{L^p(\vgf_p)}$ for all $1\leq p\leq \infty$.
\end{example}
\begin{remark}
For $L^p$-Liouville theorems for symmetric diffusion operators on complete Riemannian manifolds, see \cite{XDLi,Wu} and references therein.
\end{remark}
\section{Semigroups and generators}\label{secsemigr}
\begin{definition}
Let $B$ be a Banach space and $\Gl\subset \mathbb{C}$, and
consider a one-parameter family of operators $\mathcal{J}(\gl)\in \mathcal{L}(B)$ defined for each $\gl\in \Gl$.
The family $\{\mathcal{J}(\gl)\mid \gl\in\Gl\}$ is called a {\em pseudoresolvent}
if
$$\mathcal{J}(\gl)-\mathcal{J}(\gn)=(\gn-\gl)\mathcal{J}(\gl)\mathcal{J}(\gn)$$
holds for all $\gl,\gn\in\Gl$ (see \cite[Definition~4.3]{EN}).
\end{definition}

Let $L$ be an elliptic operator on $\Gw$ of the form \eqref{div_L},
and $W$ a positive potential.  Fix $\gm\leq \gl_0=\gl_0(L,W,\Gw)$, and let
\begin{equation}\label{Gl}
       \Gl:=\left\{
              \begin{array}{ll}
       \{\gl \in \R\mid\gl\leq \gm\}  &\qquad\mbox{if $L_{\gm}$ is subcritical,} \\[4mm]
  \{\gl \in \R\mid\gl<\gm\}  &\qquad\mbox{if $\gm=\gl_0$ and $L_{\gl_0}$ is critical.}
              \end{array}
            \right.
  \end{equation}
Recall that by (2.10) of \cite{Pcrit2}, for all $\gl,\gn\in \Gl$ the
corresponding Green functions
satisfy the (pointwise) resolvent equation
\begin{equation}\label{reseq}
\Green{\Omega}{L_\gl}{x}{y}=
\Green{\Omega}{L_\gn}{x}{y} + (\gl-\gn)\int_{\Omega}
\Green{\Omega}{L_\gl}{x}{z}W(z)\Green{\Omega}{L_\gn}{z}{y}\dnu(z)
\end{equation}
for all $x,y\in \Omega$, $x\neq y$.

Let $\gf$ and $\tilde{\gf}$ be two fixed positive
solutions of the equations $L_{\gm}u=0$ and $L^\star _{\gm}u=0$ in
$\Gw$, respectively. It follows from Theorem~\ref{thmbddgen0} and \eqref{reseq} that for any $1\leq p\leq \infty$, the family
$$\{\mathcal{G}_{(-\gl)}\!\!\upharpoonright_{L^p(\gf_p)}\mid -\gl\in \Gl\}=\{\mathcal{G}_{L+\gl W}\!\!\upharpoonright_{L^p(\gf_p)}\mid -\gl\in \Gl\}$$ is a
pseudoresolvent on $L^p(\gf_p)$.

We claim that for $1\leq p < \infty$ and $\gl\in \Gl$, the range of
$\mathcal{G}_\gl\!\!\upharpoonright_{L^p(\gf_p)}$ is dense in $L^p(\gf_p)$. Indeed, take $u\in C_0^\infty(\Omega)$, and let $N_u\in \N$ be such that $\supp u \Subset \Omega_{N_u}$. Set $f(x):=(W(x))^{-1}L_\gl u(x)$. For $n\geq N_u$ denote $u_n:=\mathcal{G}^{\Omega_n}_\gl\!\!\upharpoonright_{L^p(\gf_p)}f$. Clearly,
$\supp f\subset \Omega_n$ for all $n\geq N_u$. Therefore,  by uniqueness, for any such $n$ we have $u_n=u$ in $\Omega_n$, and consequently, $u=\mathcal{G}^{\Omega}_\gl\!\!\upharpoonright_{L^p(\gf_p)}f$, and $u$ belongs to the range of $\mathcal{G}_\gl\!\!\upharpoonright_{L^p(\gf_p)}$. Since
for $1\leq p < \infty$ the space $C_0^\infty(\Omega)$ is dense in $L^p(\gf_p)$, it follows that the range of
$\mathcal{G}_\gl\!\!\upharpoonright_{L^p(\gf_p)}$ is dense in $L^p(\gf_p)$.

 On the other hand, by Theorem~\ref{thmbddgen},
zero is not an eigenvalue of the operators
$\mathcal{G}_\gl\!\!\upharpoonright_{L^p(\gf_p)}$ for $\gl<\gm$ and all
$1< p\leq \infty$. Moreover, for $1\leq p \leq \infty$ we have
\be\label{eq:1n}\|\mathcal{G}_\gl\|_{L^p(\gf_p)}
\leq\frac{1}{\gm-\gl},
 \ee and in particular,
$$\limsup_{\gl\to -\infty}\|\gl\mathcal{G}_\gl\|_{L^p(\gf_p)} \leq \lim_{\gl\to -\infty}\frac{-\gl}{\gm-\gl}=1.$$
Moreover, \eqref{eq:1n} implies also that
if $\gm\geq 0$ and $\gl<0$, then
$$\|\gl\mathcal{G}_\gl\|_{L^p(\gf_p)}
\leq 1.$$ Therefore, by Proposition~III.4.6 and Corollary~III.4.7
of \cite{EN}, and the Hille-Yosida theorem \cite[Theorem~II.3.5]{EN}, we have:
\begin{theorem} Let $L$ be an elliptic operator on $\Gw$ of the form \eqref{div_L},
and let $W$ be a positive potential. Fix  $1 < p< \infty$,  $\gm\leq
\gl_0$, and let $\Gl$ be as in \eqref{Gl}. Let $\gf$ and $\tilde{\gf}$ be two fixed positive
solutions of the equations $L_{\gm}u=0$ and $L^\star _{\gm}u=0$ in
$\Gw$, respectively.

(i) The pseudoresolvent family
$$\{\mathcal{G}_{(-\gl)}\!\!\upharpoonright_{L^p(\gf_p)}\mid
-\gl\in\Gl \}$$ is a resolvent of a densely defined closed operator
$$A_p:=-\left(\mathcal{G}_{\gl_1}\!\!\upharpoonright_{L^p(\gf_p)}\right)^{-1}-\gl_1 \qquad \mbox{where }\gl_1\in \Gl$$ on $L^p(\gf_p)$ with a domain $D(A_p)=R(\mathcal{G}_{\gl_1}\!\!\upharpoonright_{L^p(\gf_p)})$. In particular,
$A_p=-\frac{1}{W}L$ on $\core$. Moreover,
$(-\gm,\infty)\subset \gr(A_p)$, and for $\gl\in (-\gm,\infty)$ we have
$$R(\gl, A_p):=(\gl-A_p)^{-1}=\mathcal{G}_{(-\gl)}\!\!\upharpoonright_{L^p(\gf_p)}.$$

(ii) Zero is not an eigenvalue of $\mathcal{G}_\gl\!\!\upharpoonright_{L^p(\gf_p)}$.

(iii) If $\gm\geq 0$, then $(A_p, D(A_p))$ generates a strongly
continuous contraction semigroup. Moreover, for every $\gl\in
\mathbb{C}$ with $\mathrm{Re}\,\gl>0$ one has $\gl\in \gr(A_p)$,
and
$$\|R(\gl,A_p)\|_{L^p(\gf_p)}\leq \frac{1}{\mathrm{Re}\,\gl}\,.$$
\end{theorem}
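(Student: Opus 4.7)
The plan is to assemble the three ingredients that the discussion preceding the statement has already individually prepared---the pseudoresolvent identity, density of the range, and triviality of the kernel---and then feed them into the abstract machinery of Proposition~III.4.6 and Corollary~III.4.7 of \cite{EN} to obtain parts (i) and (ii), and into the Hille--Yosida generation theorem to obtain part (iii). Concretely, multiplying the pointwise resolvent identity \eqref{reseq} by $W(y)f(y)$ with $f\in L^p(\gf_p)$ and applying Fubini--Tonelli (whose integrability requirement is supplied by the absolute boundedness in Theorem~\ref{thmbddgen0}) gives the pseudoresolvent equation on $L^p(\gf_p)$ for all $-\gl,-\gn\in\Gl$. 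Density of the range and triviality of the kernel have already been justified in the text: the former via the $\Gw_n$-exhaustion argument showing $\core\subset R(\mathcal{G}_{\gl_1}\!\!\upharpoonright_{L^p(\gf_p)})$ combined with the density of $\core$ in $L^p(\gf_p)$ when $p<\infty$; the latter is Theorem~\ref{thmbddgen}, which is also statement (ii).

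With these three hypotheses in hand, Engel--Nagel produces a uniquely determined densely defined closed operator $A_p$ on $L^p(\gf_p)$ whose resolvent coincides with the pseudoresolvent; setting $\gl=0$ in the abstract formulas forces $A_p=-\mathcal{G}_{\gl_1}^{-1}-\gl_1$ on $D(A_p)=R(\mathcal{G}_{\gl_1}\!\!\upharpoonright_{L^p(\gf_p)})$, independently of the choice of $\gl_1\in\Gl$, and $(-\gm,\infty)\subset\gr(A_p)$ with $R(\gl,A_p)=\mathcal{G}_{(-\gl)}\!\!\upharpoonright_{L^p(\gf_p)}$ for $-\gl\in\Gl$. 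The only piece of (i) that still needs a short argument is the identification $A_p=-W^{-1}L$ on $\core$. For this, given $u\in\core$ I would set $f:=W^{-1}L_{\gl_1}u$, observe that $f$ has compact support and hence $f\in L^p(\gf_p)$ by the local regularity of $L$'s coefficients, and verify $\mathcal{G}_{\gl_1}f=u$: on each $\Gw_n$ with $\supp u\Subset\Gw_n$ the Dirichlet Green operator $\mathcal{G}_{\gl_1}^{\Gw_n}f$ equals $u$ by uniqueness of the Dirichlet problem, and passing to the limit with Lebesgue's dominated convergence (dominated by $\mathcal{G}_{\gl_1}|f|\in L^p(\gf_p)$) yields $\mathcal{G}_{\gl_1}f=u$ in $\Gw$. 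Therefore $A_p u=-f-\gl_1 u=-W^{-1}L_{\gl_1}u-\gl_1 u=-W^{-1}Lu$.

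For part (iii), the plan is to apply Hille--Yosida: when $\gm\geq 0$, one has $(0,\infty)\subset\Gl$ for the shifted parameter, so $(0,\infty)\subset\gr(A_p)$, and estimate \eqref{gnormgen} gives $\|\gl R(\gl,A_p)\|_{L^p(\gf_p)}=\|\gl\mathcal{G}_{(-\gl)}\|_{L^p(\gf_p)}\leq \gl/(\gm+\gl)\leq 1$ for all $\gl>0$. Together with the closedness and dense domain from (i), this is exactly the Hille--Yosida criterion for $(A_p,D(A_p))$ to generate a strongly continuous contraction semigroup. The bound on the right half-plane then follows from the fact that a $C_0$ contraction semigroup automatically has resolvent set containing $\{\mathrm{Re}\,\gl>0\}$ with $\|R(\gl,A_p)\|\leq(\mathrm{Re}\,\gl)^{-1}$.

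The step I expect to require the most care is the identification $A_p\!\!\upharpoonright_{\core}=-W^{-1}L$ in part (i): one must verify both the integrability of $W^{-1}L_{\gl_1}u$ in the weighted space and the convergence $\mathcal{G}_{\gl_1}^{\Gw_n}f\to\mathcal{G}_{\gl_1}f$ in $L^p(\gf_p)$ sense rather than merely pointwise, in order to legitimately conclude that $u$ lies in $D(A_p)$ and not just in a larger space on which the formal identity holds. Everything else reduces to careful bookkeeping with the already established boundedness estimate \eqref{gnormgen} and the cited abstract theorems.
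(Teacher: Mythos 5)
Your proposal is correct and takes essentially the same route as the paper: you derive the pseudoresolvent identity on $L^p(\gf_p)$ from \eqref{reseq}, quote the already-established density-of-range and kernel-triviality facts, feed them into Proposition~III.4.6 and Corollary~III.4.7 of \cite{EN} for parts (i)--(ii), and verify the Hille--Yosida bound $\|\gl R(\gl,A_p)\|\le \gl/(\gm+\gl)\le 1$ for $\gl>0$ when $\gm\ge 0$ to obtain part (iii). The additional care you take about the $L^p(\gf_p)$-convergence in the identification $A_p\!\!\upharpoonright_{\core}=-W^{-1}L$ is harmless but not strictly needed, since once $u_n\equiv u$ on $\Gw_n$ and $u_n(x)\to \mathcal{G}_{\gl_1}f(x)$ pointwise (by monotonicity of the Green functions), the equality $u=\mathcal{G}_{\gl_1}f$ holds a.e.\ and $u\in R(\mathcal{G}_{\gl_1}\!\!\upharpoonright_{L^p(\gf_p)})$ follows automatically from Theorem~\ref{thmbddgen0}.
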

\section{Compactness and semismall perturbations}\label{secanti}
 Throughout this section, we assume that $L$ is subcritical in $\Gw$ and $W>0$
is a {\em semismall perturbation} of $L$ and $L^\star $ in
$\Gw$. By Remark~\ref{remspert}, $\gl_0 > 0$, and  $L_{\gl_0}$ is positive-critical.  Denote by $\gf$ and $\tilde{\gf}$ the ground states of $L_{\gl_0}$
and $L^\star_{\gl_0} $, respectively. We may assume that $\gf(x_0) = 1$.  So,
 \be \label{phipl}
(L-\gl_0W)\gf = (L^\star -\gl_0W)\tilde{\gf} = 0\; \mbox{ in } \Gw \quad \mbox{and } \tilde{\gf}W\gf\in L^1(\Gw).
\end{equation}
Without loss of generality, we may assume that \be\label{phiwphi} \int_\Gw
\tilde{\gf}(x)W(x)\gf(x)\dnu(x)=1.
\end{equation}
It follows that
$\tilde{\gf} \in (L^1(\tilde{\gf}_1))\subset
(L^\infty(\gf_\infty))^\star $, and for every $f\in
L^\infty(\gf_\infty)$ we have $\langle \tilde{\gf},f
\rangle=\int_\Gw \tilde{\gf}(x)W(x)f(x)\dnu$.

\medskip

The aim of the present section is to prove that under the above assumptions, the
integral operator $\mathcal{G}_\gl$ is compact on $L^p(\gf_p)$ for any $1\leq
p\leq \infty$ and $\gl<\gl_0$, and its spectrum is $p$-independent. We first prove the compactness of $\mathcal{G}_\gl$.

\begin{theorem}\label{thmcomp}
 Let $L$ be a subcritical operator in $\Gw$. Assume that $W>0$ is a semismall perturbation of $L^\star $ and $L$ in $\Gw$. Then for any $1\leq
p\leq \infty$ and $\gl<\gl_0$, the integral operators
\begin{align*}
\mathcal{G}_\gl f(x)=  \int_{\Omega}  \Green{\Omega}{L-\gl
W}{x}{y}W(y)f(y)\dnu(y),\\[2mm] 
\mathcal{G}_\gl^\odot f(y) =
 \int_{\Omega}  \Green{\Omega}{L-\gl W}{x}{y}W(x)f(x)\dnu(x)
 \end{align*}
are compact on $L^{p}(\gf_p)$ and $L^{p}(\tilde{\gf}_p)$,
respectively.
\end{theorem}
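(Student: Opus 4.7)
The plan is to establish compactness first at the endpoint $L^\infty(\gf_\infty)$, transfer it to $L^1(\gf_1)$ by Banach-space duality, and interpolate for intermediate $p$. The analogous statement for $\mathcal{G}_\gl^\odot$ on $L^p(\tilde{\gf}_p)$ is then obtained by the same argument with $L$ and $L^\star$ interchanged (the hypothesis is symmetric, since $W$ is a semismall perturbation of both). For the sup endpoint, I decompose
\[
\mathcal{G}_\gl = \mathcal{G}_\gl^{(k)} + R_k, \quad \mbox{where } \mathcal{G}_\gl^{(k)} f(x) := \int_{\Omega_k} G^\Omega_{L_\gl}(x,y) W(y) f(y) \dnu(y),
\]
and aim to prove that $\|R_k\|_{L^\infty(\gf_\infty)} \to 0$ while each $\mathcal{G}_\gl^{(k)}$ is compact on $L^\infty(\gf_\infty)$; then $\mathcal{G}_\gl$ will be a norm limit of compact operators and hence compact.

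For the tail bound $\|R_k\|_{L^\infty(\gf_\infty)} \to 0$, I would combine the two-sided equivalence $\gf(\cdot) \asymp G^\Omega_{L_\gl}(\cdot, x_0)$ from \eqref{eq_sp} with the semismall perturbation hypothesis for $L^\star$ (which gives uniform smallness on each $\Omega_m^\star$), and with Dini's theorem on the complementary compact set $\overline{\Omega_m}$ (using monotonicity in $k$ and continuity of the tail integrand there). For the compactness of $\mathcal{G}_\gl^{(k)}$: given a bounded sequence $\{f_n\}\subset L^\infty(\gf_\infty)$, the functions $u_n := \mathcal{G}_\gl^{(k)} f_n$ satisfy $L_\gl u_n = W f_n \chi_{\Omega_k}$ in $\Gw$ with $|u_n|\le C\gf$ by Theorem~\ref{thmbddgen0}. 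Interior elliptic regularity and Arzel\`a--Ascoli then produce a subsequence converging locally uniformly to some $u$. To upgrade this to convergence in $\|\cdot\|_{\infty,\gf_\infty}$, observe that in $\Gw_m^\star$ (with $m>k$) the difference $w:=u_n-u$ solves $L_\gl w=0$; its Doob transform $h:=w/\gf$ then solves a Schr\"odinger-type equation with strictly positive potential $(\gl_0-\gl)W$, whose maximum principle for bounded solutions gives
\[
\sup_{\Gw_m^\star} \frac{|w|}{\gf} \le \sup_{\partial\Gw_m} \frac{|w|}{\gf},
\]
and the right-hand side vanishes by local uniform convergence on the compact set $\partial\Gw_m$.

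For $L^1(\gf_1)$, the identification in Theorem~\ref{thmbddgen0}(2) of $\mathcal{G}_\gl^\odot\!\!\upharpoonright_{L^\infty(\tilde{\gf}_\infty)}$ as the Banach adjoint of $\mathcal{G}_\gl\!\!\upharpoonright_{L^1(\gf_1)}$, combined with Schauder's theorem, promotes the compactness of $\mathcal{G}_\gl^\odot$ on $L^\infty(\tilde{\gf}_\infty)$ (established by the mirror version of the previous step) to compactness of $\mathcal{G}_\gl$ on $L^1(\gf_1)$. For intermediate $p\in(1,\infty)$, the $p$-independent isometric isomorphism $f\mapsto f/\gf$ identifies $L^p(\gf_p)$ with $L^p(\Gw,\gf W\tilde{\gf}\,\dnu)$; on this standard $L^p$-scale over a finite measure, Krasnosel'skii's interpolation theorem for compact operators applied to the endpoints (compact at $p=1$, bounded at $p=\infty$) yields compactness of $\mathcal{G}_\gl$ on $L^p(\gf_p)$ for all $1<p<\infty$. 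The principal technical obstacle is the upgrade from local uniform convergence to weighted sup-norm convergence at $p=\infty$: it is here that the maximum principle for the Doob-transformed equation, and hence the full strength of the semismall perturbation hypothesis (through \eqref{eq_sp}), is essential.
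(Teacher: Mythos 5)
Your overall architecture --- establish compactness at one endpoint, then transfer to the whole scale by duality and interpolation --- matches the paper, which also proves compactness first on $L^\infty(\gf_\infty)$ and then invokes Cwikel's one-sided real interpolation theorem \cite{Cwikel} (the paper's Remark~\ref{remSchauder} even notes your Schauder route to $L^1$ as an alternative). The main organizational difference is at the sup endpoint: you truncate the operator as $\mathcal{G}_\gl=\mathcal{G}_\gl^{(k)}+R_k$ and argue that $\|R_k\|\to 0$ while each $\mathcal{G}_\gl^{(k)}$ is compact, whereas the paper works directly with the image sequence $u_n=\mathcal{G}_\gl f_n$ and shows it is Cauchy by splitting the estimate over $\Gw_K$ and $\Gw_K^\star$ using the decomposition of \cite[Lemma~4.3]{P99}. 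These are two packagings of the same content, and your ``norm-limit of compact operators'' framing is arguably cleaner; but note that the quantity $\|R_k\|_{L^\infty(\gf_\infty)}$ is exactly $\sup_x\gf(x)^{-1}\int_{\Omega_k^\star}\Green{\Omega}{L_\gl}{x}{y}W(y)\gf(y)\dnu(y)$, which is the same tail estimate the paper extracts from the semismall hypothesis, so no work is saved there.

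The one step that is not rigorous as stated is the bound $\sup_{\Gw_m^\star}|w|/\gf\le\sup_{\partial\Gw_m}|w|/\gf$ justified by ``the maximum principle for bounded solutions'' of the Doob-transformed Schr\"odinger-type equation with potential $(\gl_0-\gl)W>0$. On an \emph{unbounded} exterior domain there is no such maximum principle for merely bounded solutions: for instance, $u=1-1/r$ is a bounded harmonic function on $\R^3\setminus B_1$ vanishing on $\partial B_1$, and the same phenomenon persists for $-\Delta+V$ with $V>0$ decaying at infinity, because $\mathbf 1$ is then a strict supersolution but not of minimal growth. The correct statement --- and the place where the semismall hypothesis actually enters --- is that \eqref{eq_sp} gives $\gf\asymp\Green{\Omega}{L_\gl}{\cdot}{x_0}$, so $\gf$ is of minimal growth for $L_\gl$ (equivalently, $\mathbf 1$ is of minimal growth for $L^\gf_\gl$ after the Doob transform), and then the comparison with the supersolution $\vge\gf$ on $\Gw_m^\star$ is the minimal-growth comparison of Definition~\ref{defminimalg}, not a Phragm\'en--Lindel\"of principle for bounded solutions. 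The paper handles exactly this via the decomposition $u_n-u_m=h_{n,m}+(\text{integral over }\Gw_K^\star)$ of \cite[Lemma~4.3]{P99}, where $h_{n,m}$ is produced with minimal growth built in, and via the uniqueness result \cite[Theorem~4.6]{P99}. Replacing your appeal to a ``maximum principle for bounded solutions'' with a minimal-growth comparison (anchored in $\gf\asymp\Green{\Omega}{L_\gl}{\cdot}{x_0}$) closes the gap; with that repair, your proof is correct and runs parallel to the paper's.
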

\begin{proof} By \cite[Theorem~5.1]{P99}, the operators
$\mathcal{G}_\gl\!\!\upharpoonright_{L^{\infty}(\gf_\infty)}$ and
$\mathcal{G}_\gl^\odot\!\!\upharpoonright_{L^{\infty}(\tilde{\gf}_\infty)}$ are
compact on $L^{\infty}(\gf_\infty)$ and
$L^{\infty}(\tilde{\gf}_\infty)$, respectively. For the sake of completeness, we prove the compactness of $\mathcal{G}_\gl\!\!\upharpoonright_{L^{\infty}(\gf_\infty)}$; the proof of the compactness of $\mathcal{G}_\gl^\odot\!\!\upharpoonright_{L^{\infty}(\tilde{\gf}_\infty)}$ is identical.

Let $\{f_n\}$ be a bounded sequence in $L^{\infty}(\gf_\infty)$. By Theorem~\ref{thmbddgen0}, the sequence
$u_n:=\mathcal{G}_\gl\!\!\upharpoonright_{L^{\infty}(\gf_\infty)}f_n$ is bounded in $L^{\infty}(\gf_\infty)$, and satisfies
$$|u_n(x)|\leq \int_{\Omega}\Green{\Omega} {L_{\gl}}{x}{y}W(y)|f_n(y)|\dnu(y)\leq
C\gf_0(x),$$ where $C:= (\gl_0-\gl)^{-1}\sup_n \|f_n\|_{\infty,\gf_\infty}$ is independent of $n$.
Moreover, it follows that $u_n$ is the unique function in $L^{\infty}(\gf_\infty)$ which is a (weak) solution of the
equation $L_\gl u=f_n$ in $\Gw$ (cf. \cite[Theorem~4.6]{P99}). Consequently, a standard elliptic argument implies that
the sequence $\{u_n\}$ admits a subsequence which
converges in the compact open topology to a function $u$. Clearly, $\|u\|_{\infty,\gf_\infty}\leq C$, so, $u\in L^{\infty}(\gf_\infty)$

Since $W$ is a semismall perturbation, it follows that for any given $\vge>0$ there exists $K$ such that for any $k\geq K$
and $n,m\in \Nat$
\begin{multline}\label{extest}
 \int_{\Omega_k^*}\Green{\Omega}{L_\gl}{x}{y}W(y)|f_n(y)-f_m(y)|\dnu(y) \leq\\[2mm]
2C\int_{\Omega_k^*}\Green{\Omega}{L_\gl}{x}{y}W(y)\gf_0(y)\dnu(y)<\vge\gf_0(x) \qquad \forall x\in \overline{\Gw_k^*},
\end{multline}
and by the generalized maximum principle in $\Gw_k$, \eqref{extest} holds for any $x\in\Gw$.

The local
uniform convergence of $\{u_n\}$ implies that there exists $N_\vge\in \Nat$ such that
$|u_n-u_m|\leq \vge\gf_0$ in $\Gw_K$ for all $n,m \geq N_\vge$.

Fix $n,m \geq N_\vge$. It follows from \cite[Lemma~4.3]{P99} and the linearity that on $\Gw_K^*$ we have
$$u_n(x)-u_m(x)=h_{n,m}(x)+ \int_{\Omega_K^*}\Green{\Omega^*_K}{L_\gl}{x}{y}W(y)(f_n(y)-f_m(y))\dnu(y),$$
where $h_{n,m}\in L^{\infty}(\gf_\infty)$ satisfies
$$L_\gl h_{n,m}=0 \quad \mbox{in } \Gw_K^*, \quad \mbox{and} \quad h_{n,m}(x)=u_n(x)-u_m(x) \quad x\in \partial \Gw_K.$$
Since $|h_{n,m}|\leq \vge\gf_0$ on $\partial \Gw_K$, and $h_{n,m}$ has minimal growth in $\Gw$, it follows that $|h_{n,m}|\leq 2\vge\gf_0$ in $\Gw_K^*$. On the other hand, by \eqref{extest} we have
\begin{multline*}
\left|\int_{\Omega_K^*}  \Green{\Omega^*_K}{L_\gl}{x}{y}W(y)(f_n(y)-f_m(y)) \dnu(y) \right|  \leq\\[2mm]
\int_{\Omega_K^*}    \Green{\Omega}{L_\gl}{x}{y}W(y)|f_n(y)-f_m(y)| \dnu(y)  <
\vge\gf_0(x) \qquad \forall x\in \Omega_K^*.
\end{multline*}

Consequently,  we infer that
$|u_n-u_m|\leq 3\,\vge\gf_0$ in $\Gw^*_K$ for all $n,m\geq N_\vge$. Thus,
$u_n \to u$ in $L^{\infty}(\gf_\infty)$.

\medskip

Since for each
$1\leq p\leq \infty$ the operator
$\mathcal{G}_\gl\!\!\upharpoonright_{L^{p}(\gf_{p})}$  is bounded on $L^{p}(\gf_{p})$, and $\mathcal{G}_\gl\!\!\upharpoonright_{L^{\infty}(\gf_\infty)}$ is
compact on $L^{\infty}(\gf_\infty)$,  it
follows from a variant of the Riesz-Thorin interpolation theorem
with respect to compact operators \cite[Theorem~1.1]{Cwikel} that
$\mathcal{G}_\gl\!\!\upharpoonright_{L^{p}(\gf_p)}$ and are compact for
all $1\leq p\leq \infty$. The same is true for $\mathcal{G}_\gl^\odot\!\!\upharpoonright_{L^p(\tilde{\gf}_p)}$.
\end{proof}
\begin{remark}\label{remSchauder}
For $\gl<\gl_0$ the operators
$\mathcal{G}_\gl\!\!\upharpoonright_{L^{\infty}(\gf_\infty)}$ and
$\mathcal{G}_\gl^\odot\!\!\upharpoonright_{L^{\infty}(\tilde{\gf}_\infty)}$ are the
dual operators of
$\mathcal{G}_\gl^\odot\!\!\upharpoonright_{L^{1}(\tilde{\gf}_{1})}$ and
$\mathcal{G}_\gl\!\!\upharpoonright_{L^{1}(\gf_{1})}$, respectively. Therefore, the
well-known Schauder theorem directly implies that
$\mathcal{G}_\gl^\odot\!\!\upharpoonright_{L^{1}(\tilde{\gf}_{1})}$ and
$\mathcal{G}_\gl\!\!\upharpoonright_{L^{1}(\gf_{1})}$ are compact on
$L^{1}(\tilde{\gf}_1)$ and $L^{1}(\gf_1)$, respectively.
\end{remark}
\begin{remark}\label{Albert}
In the proof of Theorem~\ref{thmcomp} we used the fact that {\em real} interpolation preserves the compactness of an operator.
We recall that in his remarkable paper \cite{AC} A.~Calder\'on implicitly asked a question which is apparently still open today: Does {\em complex} interpolation preserve the compactness of an operator? For a recent survey on this question see \cite{MC}.
\end{remark}
The next theorem discusses the spectral properties of $\mathcal{G}_\gl\!\!\upharpoonright_{L^{p}(\gf_p)}$.
\begin{theorem}\label{corpindepnd}
Under the assumptions of Theorem~\ref{thmcomp} we have:
\begin{enumerate}
\item For $1\leq p\leq \infty$, the spectrum of $\mathcal{G}_\gl\!\!\upharpoonright_{L^{p}(\gf_p)}$ contains $0$, and besides, consists of at most a sequence of eigenvalues of
finite multiplicity which has no point of accumulation except $0$.

\item For any $1\leq p\leq\infty$, $\gf$ (resp.    $\tilde{\gf}$) is the unique nonnegative  eigenfunction of the operator
$\mathcal{G}_\gl\!\!\upharpoonright_{L^p(\gf_p)}$ (resp.,
$\mathcal{G}_\gl^\odot\!\!\upharpoonright_{L^p(\tilde{\gf}_p)}$). The corresponding eigenvalue
$\gn=(\gl_0-\gl)^{-1}$ is simple.

\item The spectrum of $\mathcal{G}_\gl\!\!\upharpoonright_{L^{p}}(\gf_p)$  is
$p$-independent for all $1\leq p\leq \infty$, and we have
$$0\in \gs\left(\mathcal{G}_\gl\!\!\upharpoonright_{L^p(\gf_p)}\right) = \gs\left(\mathcal{G}_\gl^\odot\!\!\upharpoonright_{L^p(\tilde{\gf}_p)}\right)
 \subset\overline{ B\Big(0, (\gl_0-\gl)^{-1}\Big)} .$$

\end{enumerate}
\end{theorem}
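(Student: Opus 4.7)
The proof will naturally split along the three items, each building on machinery already established in the paper.

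For (1), Theorem~\ref{thmcomp} gives that $\mathcal{G}_\gl\!\!\upharpoonright_{L^p(\gf_p)}$ is compact on the infinite-dimensional Banach space $L^p(\gf_p)$. The Riesz--Schauder theorem then immediately yields the stated structural description of the spectrum: at most a countable set of isolated eigenvalues of finite algebraic multiplicity whose only possible accumulation point is $0$, together with $0$ itself (which lies in the spectrum since the range of a compact operator on an infinite-dimensional space is non-closed).

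For (2), Remark~\ref{remspert}({\em ii}) ensures that under the semismallness hypothesis the operator $L_{\gl_0}$ is positive-critical with ground states $\gf$ and $\tilde\gf$, and one may normalize as in \eqref{phipl}--\eqref{phiwphi}. The uniqueness of the nonnegative eigenfunction now follows by combining Theorem~\ref{thmbdd}({\em ii}) (for $p=\infty$) and Theorem~\ref{thmbdd}({\em iii}) (for $1\leq p<\infty$); that the corresponding eigenvalue $(\gl_0-\gl)^{-1}$ is simple for every $1\leq p\leq\infty$ is supplied by Theorem~\ref{thmbdd}({\em ii}) at $p=\infty$, by Theorem~\ref{thmbddpleq2} at $p=2$, and by Theorem~\ref{thmbddgenl1}({\em ii}) at $p=1$, from which the remaining values of $p$ are reached through the dense embedding \eqref{imbedding}. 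The analogous assertions for $\mathcal{G}_\gl^\odot\!\!\upharpoonright_{L^p(\tilde\gf_p)}$ with eigenfunction $\tilde\gf$ follow by symmetry.

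For (3), the inclusion $\sigma(\mathcal{G}_\gl\!\!\upharpoonright_{L^p(\gf_p)})\subset\overline{B(0,(\gl_0-\gl)^{-1})}$ is immediate from the operator-norm bound in Theorem~\ref{thmbddgen0}(1), and $0\in\sigma$ comes from (1). Since a bounded operator and its dual share their spectrum, Theorem~\ref{thmbddgen0}(2) gives $\sigma(\mathcal{G}_\gl\!\!\upharpoonright_{L^p(\gf_p)})=\sigma(\mathcal{G}_\gl^\odot\!\!\upharpoonright_{L^{p'}(\tilde\gf_{p'})})$; combined with $p$-independence applied separately to $\mathcal{G}_\gl$ and $\mathcal{G}_\gl^\odot$, this yields the displayed equality. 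The crux of (3) is therefore the $p$-independence itself. The plan is to show that any eigenfunction $\vgf\in L^p(\gf_p)$ of $\mathcal{G}_\gl$ with a nonzero eigenvalue $\gn$ in fact belongs to $L^\infty(\gf_\infty)$; by the embedding \eqref{imbedding} it then lies in every $L^q(\gf_q)$ with $\vgf=\gn^{-1}\mathcal{G}_\gl\vgf$, so the nonzero spectrum cannot depend on $p$. To establish $\vgf\in L^\infty(\gf_\infty)$, I would write
\[
\frac{|\vgf(x)|}{\gf(x)}\leq |\gn|^{-1}\int_\Gw\frac{\Green{\Omega}{L_\gl}{x}{y}}{\gf(x)}\,W(y)|\vgf(y)|\dnu(y),
\]
invoke $\vgf\in L^1(\gf_1)$ from \eqref{imbedding} (valid since $L_{\gl_0}$ is positive-critical), and combine the comparison estimate \eqref{eq_sp}, applied for both $L$ and $L^\star$, with the $3G$-type inequalities implicit in the semismallness definition to obtain a uniform bound of the form $\Green{\Omega}{L_\gl}{x}{y}\leq C\,\gf(x)\tilde\gf(y)$. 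Substituting yields $\|\vgf\|_{\infty,\gf_\infty}\leq C|\gn|^{-1}\|\vgf\|_{1,\gf_1}$. The main obstacle is precisely this global Martin-type factorization of $G^\Omega_{L_\gl}$; the semismallness of $W$ with respect to both $L$ and $L^\star$ (rather than merely one) is what allows one to pass from the integral bound of Definition~\ref{semispertdef1} to the required pointwise comparison, a passage that mirrors the standard construction of minimal Martin functions in the setting of \cite{Mu97,P89}.
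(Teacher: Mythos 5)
Parts (1) and (2) of your proposal are sound and align with the paper (for (2) the paper simply invokes Theorem~\ref{thmbddgenl1}, but your longer route through Theorems~\ref{thmbdd}, \ref{thmbddpleq2}, \ref{thmbddgenl1} reaches the same conclusions).

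Part (3), however, contains a genuine gap. You propose to prove $p$-independence by bootstrapping every $L^p$-eigenfunction into $L^\infty(\gf_\infty)$, and to do so you want a global pointwise factorization $\Green{\Omega}{L_\gl}{x}{y}\leq C\,\gf(x)\tilde\gf(y)$. This bound is \emph{false}: as $y\to x$ the Green function blows up (e.g.\ like $|x-y|^{2-d}$ for $d\geq3$), while $\gf(x)\tilde\gf(y)$ stays locally bounded, so no such constant $C$ exists. Even away from the diagonal the bound is not a consequence of the semismallness hypothesis: a global factorization of this type is essentially intrinsic ultracontractivity of the associated semigroup (in the symmetric case), and Remark~\ref{remspert}({\em iii}) explicitly recalls the Ba\~nuelos--Davis example of a domain in which $\mathbf{1}$ is a \emph{small} perturbation of the Laplacian yet the semigroup is \emph{not} intrinsically ultracontractive. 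Estimate \eqref{eq_sp} compares $\gf$ with $G_{L_\gl}(\cdot,x_0)$ for a single fixed pole $x_0$; it gives no two-variable factorization, and the ``3G-type inequalities implicit in the semismallness definition'' do not supply one either (they are integral conditions, not pointwise ones). So the central step of your argument for (3) does not go through.

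The paper takes a shorter, purely soft route that avoids any pointwise estimate. By Theorem~\ref{thmcomp} all the operators are compact, so their nonzero spectrum equals their point spectrum. The continuous dense embeddings \eqref{imbedding} give the chains
\[
\gs_{\mathrm{point}}\bigl(\mathcal{G}_\gl\!\!\upharpoonright_{L^{\infty}(\gf_\infty)}\bigr)\subset
\gs_{\mathrm{point}}\bigl(\mathcal{G}_\gl\!\!\upharpoonright_{L^{p}(\gf_p)}\bigr)\subset
\gs_{\mathrm{point}}\bigl(\mathcal{G}_\gl\!\!\upharpoonright_{L^{1}(\gf_1)}\bigr),
\]
and the analogous chain for $\mathcal{G}_\gl^\odot$. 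Then duality (Theorem~\ref{thmbddgen0}(2) and the fact that a bounded operator and its dual have the same spectrum) yields $\gs(\mathcal{G}_\gl\!\!\upharpoonright_{L^{1}(\gf_1)})=\gs(\mathcal{G}_\gl^\odot\!\!\upharpoonright_{L^{\infty}(\tilde\gf_\infty)})$ and $\gs(\mathcal{G}_\gl^\odot\!\!\upharpoonright_{L^{1}(\tilde\gf_1)})=\gs(\mathcal{G}_\gl\!\!\upharpoonright_{L^{\infty}(\gf_\infty)})$. Chaining these equalities and inclusions around the loop forces all the point spectra to coincide, proving $p$-independence without any refined kernel estimate. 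You should replace your factorization argument with this duality argument, which is both correct and substantially simpler.
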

\begin{proof} (1) The characterization of the spectrum of
$\mathcal{G}_\gl\!\!\upharpoonright_{L^{p}(\gf_p)}$ for each $p$ follows from the
Riesz-Schauder theory for compact operators.

\medskip

(2) Follows from Theorem~\ref{thmbddgenl1}.

\medskip

(3) The compactness of all the operators $\mathcal{G}_\gl\!\!\upharpoonright_{L^{p}(\gf_p)}$ implies that it is enough to show that $\gs_{\mathrm{point}}(\mathcal{G}_\gl\!\!\upharpoonright_{L^{p}(\gf_p)})$ is $p$-independent.

By \eqref{imbedding}
we have for any $1\leq p\leq \infty$ that  \be\label{inclusion}
\gs_{\mathrm{point}}(\mathcal{G}_\gl\!\!\upharpoonright_{L^{\infty}(\gf_\infty)})\subset
\gs_{\mathrm{point}}(\mathcal{G}_\gl\!\!\upharpoonright_{L^{p}(\gf_p)}) \subset
\gs_{\mathrm{point}}(\mathcal{G}_\gl\!\!\upharpoonright_{L^{1}(\gf_1)}), \ee and
\be\label{inclusion1}
\gs_{\mathrm{point}}(\mathcal{G}^\odot_\gl\!\!\upharpoonright_{L^{\infty}(\tilde{\gf}_\infty)})\subset
\gs_{\mathrm{point}}(\mathcal{G}^\odot_\gl\!\!\upharpoonright_{L^{p}(\tilde{\gf}_p)})
\subset
\gs_{\mathrm{point}}(\mathcal{G}^\odot_\gl\!\!\upharpoonright_{L^{1}(\tilde{\gf}_1)}).
\ee
On the other hand,  $\gf$ and $\tilde{\gf}$ are $\gl_0$-invariant positive solutions of the operator $L$ and $L^\star$, respectively. Therefore, Theorem~\ref{thmbddgen1} implies that $\|\mathcal{G}_\gl\|_{L^p(\gf_p)}= (\gm-\gl)^{-1}$.

Recall that by Theorem~\ref{thmbddgen0}, for  any $1\leq p< \infty$, the operator
$\mathcal{G}_\gl^\odot\!\!\upharpoonright_{L^{p'}(\tilde{\gf}_{p'})}$ is the dual
operator of $\mathcal{G}_\gl\!\!\upharpoonright_{L^{p}(\gf_{p})}$, and
$\mathcal{G}_\gl\!\!\upharpoonright_{L^{p'}(\gf_{p'})}$ is the dual
 of $\mathcal{G}_\gl^\odot\!\!\upharpoonright_{L^{p}(\tilde{\gf}_{p})}$.
Since the spectra of a bounded operator and its dual are
equal,  we have
$$\gs(\mathcal{G}_\gl\!\!\upharpoonright_{L^{1}(\gf_{1})})=
\gs(\mathcal{G}_\gl^\odot\!\!\upharpoonright_{L^{\infty}(\tilde{\gf}_{\infty})}),
\qquad \gs(\mathcal{G}_\gl^\odot\!\!\upharpoonright_{L^{1}(\tilde{\gf}_{1})})=
 \gs(\mathcal{G}_\gl\!\!\upharpoonright_{L^{\infty}(\gf_{\infty})}).$$
Thus all the point-spectra in \eqref{inclusion} and \eqref{inclusion1}
are equal.
\end{proof}
\frenchspacing

\end{document}